\newtheorem{theorem}{Theorem}
\newtheorem{lemma}{Lemma}
\theoremstyle{definition}
\newtheorem{remark}{Remark}
\newtheorem{exercise}{Exercise}
\renewcommand{\d}{{\mathrm d}}
\newcommand{\md}{\kern1.5pt|\kern1.5pt}
\renewcommand{\Im}{\operatorname{Im}}
\newcommand{\ord}{\operatorname{ord}}
\newcommand{\Q}{\mathbb{Q}}
\newcommand{\Z}{\mathbb{Z}}
\newcommand{\cK}{\mathcal{K}}
\newcommand{\cT}{\mathcal{T}}
\begin{document}

\title{Magnetic (quasi-)modular forms}

\author{Vicen\c tiu Pa\c sol}
\address{Simion Stoilow Institute of Mathematics of the Romanian Academy, P.O.\ Box 1-764, 014700 Bucharest, Romania}
\email{vicentiu.pasol@imar.ro}

\author{Wadim Zudilin}
\address{Department of Mathematics, IMAPP, Radboud University, PO Box 9010, 6500~GL Nijmegen, Netherlands}
\email{w.zudilin@math.ru.nl}

\date{8 October 2020. \emph{Revised}: 3 February 2022}

\subjclass[2020]{11F33 (Primary), 11F11, 11F32, 11F37, 13N99}

\keywords{Ramanujan's mathematics; modular forms; quasi-modular forms; Shimura--Borcherds lift}

\begin{abstract}
A (folklore?) conjecture states that no \emph{holomorphic} modular form $F(\tau)=\sum_{n=1}^\infty a_nq^n\in q\mathbb Z[[q]]$ exists, where $q=e^{2\pi i\tau}$, such that its anti-derivative $\sum_{n=1}^\infty a_nq^n/n$ has integral coefficients in the $q$-expansion.
A recent observation of Broadhurst and Zudilin, rigorously accomplished by Li and Neururer, led to examples of \emph{meromorphic} modular forms possessing the integrality property.
In this note we investigate the arithmetic phenomenon from a systematic perspective and discuss related transcendental extensions of the differentially closed ring of quasi-modular forms.
\end{abstract}

\maketitle
%==================================================

\section{Introduction}
\label{intro}

One of the arithmetic features of modular and quasi-modular forms is integrality of the coefficients in their Fourier expansions. This is trivially seen on the generators
\begin{equation}
E_2(\tau)=1-24\sum_{n=1}^\infty\frac{nq^n}{1-q^n},
\quad
E_4(\tau)=1+240\sum_{n=1}^\infty\frac{n^3q^n}{1-q^n},
\quad
E_6(\tau)=1-504\sum_{n=1}^\infty\frac{n^5q^n}{1-q^n}
\label{eis-ser}
\end{equation}
of the ring of quasi-modular forms, as well as on the `discriminant' cusp form
$$
\Delta(\tau)=q\prod_{m=1}^\infty(1-q^m)^{24}=\frac{E_4^3-E_6^2}{1728},
$$
where $q=q(\tau)=e^{2\pi i\tau}$ for $\tau$ from the upper half-plane $\Im\tau>0$.
All $q$-expansions above converge for $q$ inside the unit disk, and in fact have polynomial growth of the coefficients.
A more suprising fact, brought to the mathematical community by Ramanujan \cite{Ra16} more than 100 years ago, is that the three Eisenstein series in \eqref{eis-ser} satisfy the algebraic system of differential equations
\begin{equation}
\delta E_2=\frac1{12}(E_2^2-E_4), \quad \delta E_4=\frac13(E_2E_4-E_6), \quad \delta E_6=\frac12(E_2E_6-E_4^2),
\label{rama-DE}
\end{equation}
where
$$
\delta=\frac1{2\pi i}\frac{\d}{\d\tau}=q\frac{\d}{\d q}.
$$
Ramanujan's notation for the Eisenstein series \eqref{eis-ser} was $P(q),Q(q),R(q)$, respectively, as he mainly viewed them as functions of the $q$-nome.
Since the functions $E_2,E_4,E_6$ are algebraically independent over $\mathbb C$, and even over $\mathbb C(q)$ and over $\mathbb C(\tau,q)$ \cite{Ma69,Re66}, this fine structure gives rise to remarkable applications in transcendental number theory to the values of quasi-modular forms.
One particular notable example in this direction is a famous theorem of Nesterenko \cite{Ne96}, which states that, given a complex number $q$ with $0<|q|<1$, at least three of the four quantities $q,P(q),Q(q),R(q)$ are algebraically independent over~$\mathbb Q$.

Establishing transformation properties of a double integral, which characterises the output voltage of a Hall plate affected by the shape of the plates and sizes of the contacts and which is\,---\,for this reason\,---\,dubbed \emph{magnetic}, in the work \cite{BZ19} Broadhurst and the second author came across a \emph{meromorphic} modular form (on a congruence subgroup), whose anti-derivative had integral coefficients in its $q$-expansion and was not a modular object itself.
This arithmetic observation was subsequently proven by Li and Neururer in \cite{LN19} who also noticed
that the formal anti-derivative
$$
\tilde F_{4a}=\delta^{-1}\biggl(\frac{\Delta}{E_4^2}\biggr)=\int_0^q\frac{\Delta}{E_4^2}\,\frac{\d q}q
$$
of the meromorphic modular form $F_{4a}(\tau)=\Delta/E_4^2$ has integer coefficients in its $q$-expansion. (They proved a slightly weaker version about the integrality of the anti-derivative of $64\Delta/E_4^2$.) The function $F_{4a}(\tau)$ has weight~4 and possesses the double pole at $\tau=\rho=e^{2\pi i/3}$ in the fundamental domain, and a simple analysis reveals that it is not the image under $\delta$ of an element from the (differentially closed) field $\mathbb C(q,E_2,E_4,E_6)$. This implies that the anti-derivative $\tilde F_{4a}=\delta^{-1}F_{4a}$ is transcendental over the field, hence the addition of $\tilde F_{4a}$ to the latter increases the transcendence degree by~1.
Following the background in \cite{BZ19}, Li and Neururer coined the name `magnetic modular form' to a meromorphic modular form like $F_{4a}$.
A principal goal of this note is to investigate the `magnetic modular' phenomenon further and to give more examples of those.

\begin{theorem}
\label{th1}
The meromorphic modular forms $F_{4a}(\tau)=\Delta/E_4^2$ and $F_{4b}(\tau)=E_4\Delta/E_6^2$ of weight $4$ are magnetic.
In other words, their anti-derivatives $\delta^{-1}F_{4a}$ and $\delta^{-1}F_{4b}$ have integral $q$-expansions.
\end{theorem}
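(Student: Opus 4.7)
The plan is to derive explicit Ramanujan-type identities for $F_{4a}$ and $F_{4b}$ via the system \eqref{rama-DE} and reduce the magnetic property to a sharper arithmetic claim that can be addressed through Borcherds-product identities.

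First, by direct computation using \eqref{rama-DE} together with $E_4^{3} - E_6^{2} = 1728\,\Delta$, I would verify the identities
$$
576\,F_{4a} = \frac{\delta E_6}{3\,E_4} - \delta\!\left(\frac{E_6}{E_4}\right), \qquad 864\,F_{4b} = \delta\!\left(\frac{E_4^{\,2}}{E_6}\right) - \frac{\delta E_4^{\,2}}{4\,E_6}.
$$
The meromorphic modular functions $E_6/E_4$ and $E_4^{\,2}/E_6$ lie in $1+q\Z[[q]]$, since both $E_4,E_6\in 1+q\Z[[q]]$ and their multiplicative inverses belong to the same subring. Consequently, the $\delta$-terms on the right integrate trivially to $E_6/E_4-1$ and $E_4^{\,2}/E_6-1$, which are manifestly integer-coefficient series. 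Thus the magneticity of $F_{4a}$ and $F_{4b}$ is equivalent to the integrality of
$$
\delta^{-1}F_{4a} = \frac{1}{576}\left[\delta^{-1}\!\left(\frac{\delta E_6}{3\,E_4}\right) - \left(\frac{E_6}{E_4} - 1\right)\right], \qquad \delta^{-1}F_{4b} = \frac{1}{864}\left[\left(\frac{E_4^{\,2}}{E_6} - 1\right) - \delta^{-1}\!\left(\frac{\delta E_4^{\,2}}{4\,E_6}\right)\right].
$$

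The main obstacle is to prove these integrality statements, which bundle together two non-trivial assertions: the auxiliary weight-$4$ forms $\delta E_6/(3\,E_4)$ and $\delta E_4^{\,2}/(4\,E_6)$ are themselves magnetic, and after integration their coefficients match those of $E_6/E_4 - 1$ and $E_4^{\,2}/E_6 - 1$ modulo $576$ and $864$ respectively. My plan for this step is to identify the formal anti-derivatives $\delta^{-1}(\delta E_6/(3\,E_4))$ and $\delta^{-1}(\delta E_4^{\,2}/(4\,E_6))$ with logarithms of explicit Borcherds products on $X(1)$, constructed via the Shimura--Borcherds lift referenced in the abstract (with singular support at the elliptic points $\rho$ and $i$, respectively). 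The integer exponents of those product expansions, combined with the Eisenstein congruences $E_4 \equiv 1 \pmod{240}$ and $E_6 \equiv 1 \pmod{504}$, should deliver the sharp divisibilities by $576 = 2^{6}\cdot 3^{2}$ and $864 = 2^{5}\cdot 3^{3}$.
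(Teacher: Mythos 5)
Your two identities
$$
576\,F_{4a}=\frac{\delta E_6}{3E_4}-\delta\Bigl(\frac{E_6}{E_4}\Bigr),
\qquad
864\,F_{4b}=\delta\Bigl(\frac{E_4^{2}}{E_6}\Bigr)-\frac{\delta E_4^{2}}{4E_6}
$$
are correct (they are in essence the relations recorded in Remark~\ref{rem2} and in the derivation of Theorem~\ref{th:w4} from Theorem~\ref{th1}), but they do not reduce the problem. Since $E_6/E_4$ and $E_4^2/E_6$ lie in $1+q\Z[[q]]$, your identities say precisely that $\delta E_6/(3E_4)$ and $576F_{4a}$ (resp.\ $\delta E_4^2/(4E_6)$ and $864F_{4b}$) differ by an exact derivative of an integral series, i.e.\ they are equivalent in the sense of Section~\ref{V4}. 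Consequently your ``two bundled assertions'' are jointly \emph{identical} to the statement being proved: the congruence of $\delta^{-1}\bigl(\delta E_6/(3E_4)\bigr)$ with $E_6/E_4-1$ modulo $576$ \emph{is} the integrality of $\delta^{-1}F_{4a}$. The difficulty has been relocated, not diminished, and everything hinges on the final paragraph, which is a plan rather than an argument.

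That plan, as stated, would not work. First, there is a structural mismatch: the logarithm of a Borcherds product $q^{-h}\prod_{n>0}(1-q^n)^{c(n^2)}$ has $\delta$-derivative equal to a constant plus $-\sum_{n}c(n^2)\,nq^n/(1-q^n)$, a weight-$2$ object, so $\delta^{-1}$ of a weight-$4$ form is not such a logarithm. Second, and more fatally, integer exponents in a product expansion do not yield integral anti-derivatives: $\delta^{-1}\bigl(\sum_n c_n\,nq^n/(1-q^n)\bigr)=-\sum_n c_n\log(1-q^n)$ has unbounded denominators $1/k$ from $\log(1-q^{n})=-\sum_k q^{nk}/k$ (concretely, $\delta\Delta/\Delta=E_2$ has integer exponents $24$, yet $\delta^{-1}(E_2-1)=\log(\Delta/q)\notin\Q\otimes_\Z q\Z[[q]]$ fails integrality already at $q^5$). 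Third, the congruences $E_4\equiv1\pmod{240}$, $E_6\equiv1\pmod{504}$ involve fixed moduli and cannot produce the divisibility $n\mid A(n)$ for \emph{all} $n$ (e.g.\ for $n=p^r$ with $p\ge5$), which is what integrality of $\sum A(n)q^n/n$ amounts to. What the paper actually does is realize $F_{4a}$ and $F_{4b}$ as Shimura--Borcherds lifts $\Psi(f_{4a})$, $\Psi(f_{4b})$ of explicit weakly holomorphic forms of weight $5/2$ in the Kohnen plus space (Lemma~\ref{lem1}), so that $A(n)=\sum_{d\mid n}d\,a(n^2/d^2)$ and the goal becomes $n\mid a(n^2)$; this is then extracted from the Hecke congruence $f\md T_{p^2}^n\equiv0\bmod p^{(k-1)n}$ of Lemma~\ref{Cong1} (using $\dim S_4=0$), with separate, delicate arguments at $p=2$ and $p=3$ where $f_{4a}$ and $f_{4b}$ fail to be $p$-integral. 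Some mechanism of this kind, controlling $a(n^2)$ modulo $n$ on the half-integral-weight side, is entirely absent from your proposal and is where all the work lies.
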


\begin{theorem}
\label{th2}
The meromorphic modular form $F_6(\tau)=E_6\Delta/E_4^3$ of weight $6$ is doubly magnetic\textup:
its first and second anti-derivatives $\delta^{-1}F_6$ and $\delta^{-2}F_6$ have integral $q$-ex\-pan\-sions.
\end{theorem}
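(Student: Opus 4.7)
\emph{Plan.} I would reduce Theorem~\ref{th2} to Theorem~\ref{th1} via an explicit differential identity coming from Ramanujan's system, and then supply the missing integrality input through the same Shimura--Borcherds lift machinery that underlies the proof of Theorem~\ref{th1}.

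\emph{Step 1: Ramanujan identity.} Combining~\eqref{rama-DE} with the standard $\delta\Delta = E_2\Delta$, a direct computation gives
\[
\delta F_{4a} \;=\; \tfrac13\,E_2 F_{4a} + \tfrac23\,F_6,
\]
equivalently $F_6 = \tfrac32(\delta - \tfrac13 E_2)F_{4a}$, so that $F_6$ is, up to a factor, the Serre derivative of $F_{4a}$. Integrating once and twice yields
\[
2\,\delta^{-1}F_6 \;=\; 3 F_{4a} - \delta^{-1}(E_2 F_{4a}), \qquad 2\,\delta^{-2}F_6 \;=\; 3\,\delta^{-1}F_{4a} - \delta^{-2}(E_2 F_{4a}).
\]
By Theorem~\ref{th1} both $F_{4a}$ and $\delta^{-1}F_{4a}$ lie in $q\Z[[q]]$, so the two claims of Theorem~\ref{th2} reduce (modulo small $2$-adic checks) to the divisibilities $n\mid [q^n](E_2 F_{4a})$ and $n^2\mid [q^n](E_2 F_{4a})$.

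\emph{Step 2: Independent input.} The rearrangement $E_2 F_{4a} = 3\delta F_{4a} - 2F_6$ shows the reduction above would be circular if one only used the Ramanujan system, so the required ingredient has to come from outside the differential ring $\Q(q,E_2,E_4,E_6)$. As in Theorem~\ref{th1}, the natural source is a Shimura--Borcherds lift: one locates a vector-valued weakly-holomorphic modular form of half-integral weight whose lift produces $E_2 F_{4a}$ (or $F_6$ directly) with an explicit integral Fourier expansion, from which the single divisibility $n\mid [q^n](E_2 F_{4a})$ can be read off from the integrality of the Borcherds exponents.

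\emph{Step 3: Second integration; the main obstacle.} The hard part is proving $n^2 \mid [q^n](E_2 F_{4a})$, equivalently $n^2 \mid [q^n]F_6$, which is strictly stronger than single magnetism. Geometrically this reflects the fact that $F_6$ has a pole of order $3$ at the elliptic fixed point $\tau=\rho$, one more than $F_{4a}$, leaving room for an extra anti-derivation. My plan here is to upgrade the Borcherds-lift argument of Step 2 to a second-order version---or, equivalently, to identify $\delta^{-2}F_6$ directly with a double Eichler-type integral of a cuspidal object whose $q$-expansion is manifestly $\Z$-integral. I expect this second-order identification, rather than any of the algebraic manipulations above, to carry the bulk of the work.
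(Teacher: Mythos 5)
Your Step 1 identity $\delta F_{4a}=\tfrac13E_2F_{4a}+\tfrac23F_6$ is correct, but as you yourself note in Step 2 it is circular ($E_2F_{4a}=3\delta F_{4a}-2F_6$), so it contributes nothing; and Steps 2--3 are a plan rather than a proof. The decisive content is missing, and moreover your diagnosis of where the difficulty lies in Step 3 is off the mark. The paper does not need any ``second-order version'' of the Borcherds lift or a double Eichler integral. It exhibits an explicit weakly holomorphic form of weight $7/2$ in the Kohnen plus space, namely $f=-\tfrac1{384}f_{15+1/2}^*/\Delta(4\tau)$, whose Shimura--Borcherds lift (with the twist $D_3=-3$ in \eqref{a->A}) is $F_6$, and then applies the \emph{same} Hecke congruence as for Theorem~\ref{th1}: Lemma~\ref{Cong1} gives $f\md T_{p^2}^n\equiv0\bmod p^{(k-1)n}$, and the whole point is that for weight $k+1/2=7/2$ one has $k-1=2$, so the exponent is automatically $2n$ rather than $n$. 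Pushing this through \eqref{a->A} yields $p^n\md m\implies p^{2n}\md A(m)$, i.e.\ $m^2\md A(m)$, which is exactly double magnetism. So the ``extra'' anti-derivative is not an upgrade of the method at all; it falls out of the higher weight for free.

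Two further gaps you would have to close even after finding the right input form. First, you never construct the half-integral weight preimage of $F_6$ (or of $E_2F_{4a}$, which is not even modular, so lifting it directly is problematic); locating and verifying this form, and checking $\Psi(f)=F_6$ against the first few coefficients, is a necessary concrete step. Second, the lemma only applies to primes for which $f$ is $p$-integral and $p^2>-\ord_q(f)$; since $384=3\cdot2^7$, the primes $p=2$ and $p=3$ require separate ad hoc arguments (in the paper: a $3$-integrality analysis of $f\md T_9^n$ using the uniqueness of the basis element $q^{-1}+O(q)$ in $M_{7/2}^{!,+}$, and for $p=2$ a modified operator $T_4'=K^+\circ T_4$ together with the congruence $F_6\equiv E_6\Delta\bmod 2^4$ and the slope of $E_6\Delta$ at $2$). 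Your proposal does not mention the bad primes at all, and without them the integrality claim fails to be established.
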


There are other instances in the literature of related integrality phenomena;
however the existing methods of proofs seem to be quite different from what we use below.
Investigating the solution space of the linear differential equation
\begin{equation*}
D_kf(\tau)=0, \quad\text{where}\;
D_k=\delta^2-\frac{k+1}6E_2(\tau)\delta+\frac{k(k+1)}{12}\delta E_2(\tau),
\end{equation*}
in \cite{HK12} Honda and Kaneko found that, when $k=4$, it is spanned by $E_4$ and
$$
\tilde E_4=E_4\cdot\delta^{-1}\biggl(\frac{\Delta^{5/6}}{E_4^2}\biggr)\in q^{5/6}\mathbb Q[[q]].
$$
They numerically observed and proved some related results about the $p$-integrality of $\tilde E_4$ for primes $p\equiv1\bmod3$. This theme was later analysed and generalised in \cite{AA14,Gu13,Gu20}.
Bringing some parallel to that investigations, it is easy to check that the functions $E_4$ and $E_4\,\delta^{-1}(\Delta/E_4^2)$ (both with integer coefficients in their $q$-expansions!) span the solution space of the differential equation $Df=0$, where
$$
D=\delta^2-E_2\delta+\frac1{36}\biggl(7E_2^2-5E_4-2\frac{E_2E_6}{E_4}\biggr)
=D_5+\frac16\biggl(E_2\frac{\delta E_4}{E_4}-5\delta E_2\biggr).
$$
At the same time, the only quasi-modular solutions of $D_5y=0$ are spanned by $\delta E_4$ (see \cite[Theorem 2]{KK03}).

A somewhat different account of strong divisibility of the coefficients of modular forms shows up in the context of arithmetic properties of traces of singular moduli initiated in Zagier's work \cite{Za02}.
As this topic remains quite popular, we only list a selection of contributions \cite{Ah12,AO05,DJ08,Ed05,Gu07,Je05}.
The methods involved make use of the Shimura correspondence, which is also the main ingredient of our proof of Theorems~\ref{th1} and~\ref{th2}.

\section{Magnetic quasi-modular forms}
\label{V4}

In this part we formalise the notion of magnetic forms and give results, which may be thought of as generalisations of Theorems~\ref{th1} and \ref{th2} but use the theorems as principal steps.

Consider the family
$$
f_{a,b,c}=E_2^aE_4^bE_6^c, \quad\text{where}\; a,b,c\in\mathbb Z, \; a\ge0,
$$
of meromorphic quasi-modular forms. Their $q$-expansions all belong to $\mathbb Z[[q]]$.
For $k\in\mathbb Z$ even, denote by $W_k$ the $\mathbb{Q}$-vector space in $\mathbb Q\otimes_{\mathbb{Z}}\mathbb Z[[q]]$ (the $q$-series $f\in\mathbb Q[[q]]$ with $Nf\in\mathbb Z[[q]]$ for some $N\in\mathbb Z_{>0}$) spanned by the $q$-expansions of the forms $f_{a,b,c}$ of weight $k$, that is, with $2a+4b+6c=k$.
Because
\begin{equation}
\delta f_{a,b,c}=\frac{k-a}{12}f_{a+1,b,c}-\frac{a}{12}f_{a-1,b+1,c}-\frac{b}{3} f_{a,b-1,c+1}-\frac{c}{2} f_{a,b+2,c-1},
\label{delta}
\end{equation}
the differential operator $\delta$ defines a well defined map $W_k\to W_{k+2}$.
Clearly, the image $\delta W_k$ in $W_{k+2}$ is a $\mathbb Q$-subspace in $\mathbb Q\otimes_{\mathbb{Z}}q\Z[[q]]$; we will call $W_{k+2}^0$ the cuspidal subspace of $W_{k+2}$, that is, the set of all elements in $W_{k+2}$ with vanishing constant term in their $q$-expansion.

We will say that an element $v\in W_k^0$ is \emph{magnetic} if its formal anti-derivative
$$
\delta^{-1}v=\int_0^q v\,\frac{\d q}{q}\in \mathbb{Q}\otimes_{\mathbb{Z}} q\mathbb{Z}[[q]].
$$
We also call it \emph{strongly magnetic} if $\delta^{-1}v\in q\mathbb{Z}[[q]]$.
With the magnetic property, we can associate the equivalence relation $\sim$ on $W_k$ writing $v\sim w$ if and only if the difference $v-w$ is in $W_k^0$ and is magnetic.

Let $V_k$ (respectively, $V_k^0$) be the $\Q$-vector subspace of $W_k$ (respectively, of $W_k^0$) generated by the forms $f_{a,b,c}$ with $a\in\{0,1,\dots,k-2\}$.
According to relation \eqref{delta} this range of $a$ makes the subspace $V_k$ \emph{stable} under the $\delta$-differentiation.
Notice that $\delta V_2\subseteq V_{4}^0$. 

\begin{theorem}
\label{th:w4}
Any element of $V_4^0$ is magnetic.
\end{theorem}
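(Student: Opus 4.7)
The plan is to reduce the magnetism of an arbitrary $v\in V_4^0$ to that of the two forms $F_{4a},F_{4b}$ provided by Theorem~\ref{th1}, using the Ramanujan system~\eqref{rama-DE} as the engine of the reduction. Set $u:=E_6^2/E_4^3\in\mathbb{Z}[[q]]$ and decompose $V_4=V_4^{(0)}\oplus V_4^{(1)}\oplus V_4^{(2)}$ by the power of $E_2$, so that $V_4^{(0)}=E_4\cdot\mathbb{Q}[u,u^{-1}]$, $V_4^{(1)}=E_2 V_2$ with $V_2=(E_6/E_4)\,\mathbb{Q}[u,u^{-1}]$, and $V_4^{(2)}=E_2^2\cdot\mathbb{Q}[u,u^{-1}]$. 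As observed just before the theorem, $\delta V_2\subseteq V_4^0$ is magnetic for trivial reasons ($g\in V_2\subseteq \mathbb{Q}\otimes\mathbb{Z}[[q]]$ gives $\delta^{-1}(\delta g)=g-g(0)$); more generally $\delta h$ is magnetic for any $h\in W_2$.

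The first substantive step is to collapse $V_4^{(1)}$ and $V_4^{(2)}$ modulo magnetic forms onto $V_4^{(0)}$. For $g\in V_2$, Ramanujan's equations~\eqref{rama-DE} show that $v_g:=\delta g-\tfrac{1}{6}E_2 g$ has no $E_2$-contribution, so $v_g\in V_4^{(0)}$; the identity $E_2 g=6\delta g-6 v_g$ combined with the magnetism of $\delta g$ yields $E_2 g\equiv-6 v_g\pmod{\text{magnetic}}$, collapsing $V_4^{(1)}$ onto $V_4^{(0)}$. For $V_4^{(2)}$, the relation $E_2^2=12\,\delta E_2+E_4$ and the Leibniz expansion
\begin{equation*}
12\,\delta(E_2 u^m)=E_2^2 u^m-E_4 u^m+12m\,E_2\frac{E_6}{E_4}(u^m-u^{m-1}),
\end{equation*}
combined with $E_2 u^m\in W_2$ (hence $\delta(E_2 u^m)$ magnetic), collapse $V_4^{(2)}$ onto $V_4^{(0)}+V_4^{(1)}$ and hence onto $V_4^{(0)}$. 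Therefore any $v\in V_4^0$ is magnetic-equivalent to an element of $V_4^{(0)}\cap V_4^0$, whose $\mathbb{Q}$-basis is $\{E_4(u^m-1):m\in\mathbb{Z}\setminus\{0\}\}$. The cases $m=\pm 1$ are dispatched by Theorem~\ref{th1} via $E_4(u-1)=-1728\,F_{4a}$ and $E_4(u^{-1}-1)=1728\,F_{4b}$.

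For $|m|\ge 2$ I would proceed inductively using the telescoping identity
\begin{equation*}
E_4(u^{m+1}-1)=u^m\,E_4(u-1)+E_4(u^m-1)=-1728\,u^m F_{4a}+E_4(u^m-1),
\end{equation*}
reducing the inductive step to showing that $u^m F_{4a}$ (and symmetrically $u^{-m}F_{4b}$) is magnetic for each $m\ge 1$. The hard part is precisely this: direct algebraic attempts---integrating by parts against $\delta h$ for $h=E_6\Delta/E_4^4\in V_2$ or $h=E_2\Delta/E_4^3\in W_2$, or exploiting the factorisation $u F_{4a}=F_6\cdot(E_6/E_4^2)$ together with the integrality of $\delta^{-1}F_6$ from Theorem~\ref{th2}---yield only tautological consequences of~\eqref{rama-DE}, insufficient for the required divisibility of antiderivatives. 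I anticipate that the proof closes by applying the Shimura--Borcherds lift method of Theorem~\ref{th1} uniformly to the one-parameter families $\{u^m F_{4a}\}_{m\ge 0}$ and $\{u^{-m}F_{4b}\}_{m\ge 0}$ of weight-$4$ meromorphic modular forms, which share the pole loci of $F_{4a}$ and $F_{4b}$ with growing multiplicities, establishing the magnetism of the whole family in a single lift computation.
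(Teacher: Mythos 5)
Your reduction framework is sound up to the last step: the splitting $V_4=V_4^{(0)}\oplus V_4^{(1)}\oplus V_4^{(2)}$ by $E_2$-degree, the collapse of the degree-one and degree-two parts onto $V_4^{(0)}=E_4\,\mathbb{Q}[u,u^{-1}]$ using that $\delta h$ is magnetic for $h\in W_2$, and the base cases $E_4(u-1)=-1728F_{4a}$, $E_4(u^{-1}-1)=1728F_{4b}$ are all correct and consistent with what Theorem~\ref{th1} provides. But the proof does not close: for $|m|\ge2$ you reduce to the magnetism of $u^mF_{4a}$ and $u^{-m}F_{4b}$, concede that you cannot establish it, and conjecture that a fresh Shimura--Borcherds computation for these whole families is required. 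That last diagnosis is wrong, and the gap it leaves is genuine. The missing relations are already purely formal consequences of \eqref{delta}: you have only exploited $\delta h$ for weight-$2$ monomials $h$ of $E_2$-degree $0$ and $1$, namely $h\in(E_6/E_4)\mathbb{Q}[u,u^{-1}]$ and $h\in E_2\mathbb{Q}[u,u^{-1}]$, and one can check that modulo \emph{these} magnetic elements together with $F_{4a},F_{4b}$ the quotient of $V_4^{(0)}\cap V_4^0$ really is infinite-dimensional --- which is exactly why your ``direct algebraic attempts'' kept producing tautologies. The weight-$2$ monomials of $E_2$-degree two, $h_s=E_2^2(E_6/E_4^2)u^s=f_{2,-2-3s,2s+1}$, also lie in $W_2$, so $\delta h_s$ is magnetic as well; pushing $\delta h_s=-\tfrac16E_2g_s+\tfrac{2+3s}{3}E_2^2u^{s+1}-\tfrac{2s+1}{2}E_2^2u^s$ (with $g_s=(E_6/E_4)u^s$) through your own collapse maps yields a nontrivial recursion among $E_4u^{s+2},E_4u^{s+1},E_4u^{s},E_4u^{s-1}$ modulo magnetic forms (the coefficient of $E_4u^{s+2}$ comes out as $8(3s+2)(s+1)(3s+4)$, and that of $E_4u^{s-1}$ as $-18s(2s+1)(2s-1)$), which lets you run the induction on $m$ in both directions with no further analytic input.

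This is in substance what the paper does, organized differently: it rewrites \eqref{delta} as \eqref{delta2} and inducts on $-b$ (respectively $-c$) over \emph{all} monomials $f_{a,b,c}$ with $a\in\{0,1,2\}$ simultaneously, the term $\delta f_{a,b+1,c-1}$ being magnetic because $f_{a,b+1,c-1}$ is a weight-$2$ monomial with integral $q$-expansion, and the prefactor $(a-2)/12$ conveniently annihilating the term that would leave the range $a\le2$. Everything telescopes to $f_{0,1,0}$, $f_{0,-2,2}$, $f_{1,-1,1}$ and $f_{1,2,-1}$, and the latter three are handled by Theorem~\ref{th1} exactly as in your base case. So the theorem does follow from Theorem~\ref{th1} by differential algebra alone; your write-up is one family of $\delta$-relations short of a proof, and the extra Shimura--Borcherds lifts you anticipate are not needed.
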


\begin{remark}
\label{rem:w4}
It seems that the elements of $W_4^0$ with $a>2$ (that is, outside the range assumed in $V_4^0$) with the magnetic property are those that come as linear combinations of $\delta$-derivatives of elements from~$W_2$.
In other words, we expect that the choice of $V_4^0$ in the theorem as a magnetic space of weight~4 to be sharp.
\end{remark}

\begin{proof}[Derivation of Theorem~\textup{\ref{th:w4}} from Theorem~\textup{\ref{th1}}]
It follows from Theorem~\textup{\ref{th1}} that the forms
$$
f_{0,1,0}-f_{0,-2,2}=1728F_{4a}
\quad\text{and}\quad
f_{1,2,-1}-f_{0,1,0}
=6\delta f_{0,2,-1}-5184F_{4b}
$$
are magnetic; in other words, we have the equivalences $f_{0,-2,2}\sim f_{0,1,0}$ and $f_{1,2,-1}\sim f_{0,1,0}$.

Any element in $V_4$ can be written as $E_2^aP(E_4,E_6)/(E_4^mE_6^n)$, for some $a,m,n$ non-negative integers, $a\le2$, and $P(x,y)\in\mathbb Q[x,y]$. Such an expression clearly splits into a linear combination of the form $f_{a,b,c}\in V_4$ with $0\le a\le2$ and either $b\ge0$ or $c\ge0$.
If both $b\ge0$ and $c\ge0$ then we get only two elements in $V_4$, namely, $f_{0,1,0}$ and $f_{2,0,0} = f_{0,1,0} + 12\delta f_{1,0,0}$, both equivalent to $f_{0,1,0}$. Therefore, we only need to prove the theorem in two situations:
$b\ge0$ and $c<0$, or $b<0$ and $c\ge0$.

If $b\ge0$ and $c<0$, then there is only one form $f_{a,b,c}\in V_4$ with $c=-1$.
Indeed, solving $4=2a+4b+6c=2a+4b-6$ we get $a=1$, $b=2$. By the hypothesis, this form $f_{1,2,-1}\sim f_{0,1,0}$.
For $c\le-2$ we use equation \eqref{delta} (with $k=2$) in the form
\begin{equation*}
\frac{c+1}{2}f_{a,b,c}=-\delta f_{a,b-2,c+1}
-\frac{a}{12}f_{a-1,b-1,c+1}-\frac{b-2}{3} f_{a,b-3,c+2}-\frac{a-2}{12} f_{a+1,b-2,c+1},
\end{equation*}
and induction on $-c$ to conclude that $f_{a,b,c}$ is equivalent to a linear combination of $f_{1,2,-1}$ and $f_{0,1,0}$, hence to $f_{0,1,0}$ alone. (Notice that prefactors $a/12$ and $(a-2)/12$ leave the terms on the right-hand side in~$V_4$.)

If $b<0$ (and $c\ge0$), we use equation \eqref{delta} in the form
\begin{equation}
\frac{b+1}{3} f_{a,b,c}
=-\delta f_{a,b+1,c-1}
-\frac{a-2}{12}f_{a+1,b+1,c-1}
-\frac{a}{12}f_{a-1,b+2,c-1}
-\frac{c-1}{2} f_{a,b+3,c-2}.
\label{delta2}
\end{equation}
When $b=-1$ and $b=-2$, the only forms $f_{a,b,c}\in V_4$ possible with $c\ge0$ are $f_{1,-1,1}$ and $f_{0,-2,2}$, respectively. Substituting $a=0$, $b=-2$, $c=2$ in \eqref{delta2} leads to
$$
-\frac{1}{3} f_{0,-2,2}
=-\delta f_{0,-1,1}
+\frac{1}{6}f_{1,-1,1}
-\frac{1}{2} f_{0,1,0}
$$
implying $f_{1,-1,1}\sim f_{0,-2,2}\sim f_{0,1,0}$ from the hypothesis.
For $b\le-3$ we use \eqref{delta2} to conclude by induction on $-b$ that any such $f_{a,b,c}$ is equivalent to a linear combination of $f_{0,-2,2}$, $f_{1,-1,1}$ and $f_{0,1,0}$, hence to $f_{0,1,0}$.
This completes the proof of the theorem.
\end{proof}

\begin{remark}
\label{rem2}
It follows from the proof that we can replace the generator $f_{0,-2,2}-f_{0,1,0}$ with $f_{1,-1,1}-f_{0,1,0}$.
Furthermore, alternative choices for $f_{0,-2,2}-f_{0,1,0}$ and $f_{1,2,-1}-f_{0,1,0}$ are $\tilde F_j=E_2\cdot(\delta E_j)/E_j$ or $\hat F_j=(\delta^2E_j)/E_j$ for $j=4,6$.
\end{remark}

For weight $6$ the situation is slightly different. Only the following is true.

\begin{theorem}
\label{th:w6}
Let $U_6$ be the subspace of $V_6$ spanned over $\mathbb Q$ by $f_{a,b,c}$ with the additional constraint $c\ge 0$,
and $U_6^0=U_6\cap V_6^0$ its cuspidal subspace.
Then any element of $U_6^0$ is magnetic.
\end{theorem}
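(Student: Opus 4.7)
The plan is to mimic the derivation of Theorem~\ref{th:w4}: prove by induction on~$c$ that every generator $f_{a,b,c}$ of $U_6$ is $\sim$-equivalent to the single element $f_{0,0,1}$. Since each $f_{a,b,c}$ has constant term $1$, this immediately forces every element of $U_6^0$ to be magnetic. Two sources of relations are used throughout: first, for any weight-$4$ monomial $g=f_{a',b',c'}$ the element $\delta g\in W_6^0$ is trivially magnetic (because $g\in\Z[[q]]$); second, Theorem~\ref{th2} furnishes the seed $f_{0,-3,3}\sim f_{0,0,1}$ via the identity $1728\,F_6=f_{0,0,1}-f_{0,-3,3}$.

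The base step handles the slices $c=0,1$. Expanding $\delta f_{0,1,0}=\tfrac13(f_{1,1,0}-f_{0,0,1})$ and $\delta f_{2,0,0}=\tfrac16(f_{3,0,0}-f_{1,1,0})$ gives $f_{1,1,0}\sim f_{3,0,0}\sim f_{0,0,1}$. Combining the seed with $\delta f_{0,-2,2}=\tfrac13 f_{1,-2,2}+\tfrac23 f_{0,-3,3}-f_{0,0,1}$ yields $f_{1,-2,2}\sim f_{0,0,1}$, after which $\delta f_{1,-1,1}$ expresses $f_{2,-1,1}$ as a rational combination of $f_{0,0,1}$, $f_{1,-2,2}$ and $f_{1,1,0}$, whence $f_{2,-1,1}\sim f_{0,0,1}$. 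The one delicate remaining element $f_{4,-2,1}$ is captured by the identity
\[
\delta f_{4,-1,0}=-\tfrac{1}{3}f_{3,0,0}+\tfrac{1}{3}f_{4,-2,1},
\]
in which the prospective term $f_{5,-1,0}$ (outside $U_6$) is killed by the vanishing of its coefficient $(k-a')/12=(4-4)/12=0$; hence $f_{4,-2,1}\sim f_{3,0,0}\sim f_{0,0,1}$.

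For the inductive step, fix $c\ge2$ and assume every $f_{a',b',c'}\in U_6$ with $c'<c$ is $\sim f_{0,0,1}$. Applying~\eqref{delta} to the weight-$4$ monomial $f_{a,b+1,c-1}$ and isolating the coefficient of $f_{a,b,c}$ produces
\[
\tfrac{b+1}{3}f_{a,b,c}=-\delta f_{a,b+1,c-1}+\tfrac{4-a}{12}f_{a+1,b+1,c-1}-\tfrac{a}{12}f_{a-1,b+2,c-1}-\tfrac{c-1}{2}f_{a,b+3,c-2}.
\]
The prefactor $(b+1)/3$ is nonzero because, within $U_6$, the exceptional index $b=-1$ appears only at $(2,-1,1)$, already treated in the base step. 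All three auxiliary monomials lie in slices $c-1$ or $c-2$ of $U_6$: any threatened escape to the forbidden indices $a=5$, $a=-1$, or $c=-1$ is cancelled by the vanishing of the respective coefficients $(4-a)/12$ at $a=4$, $a/12$ at $a=0$, and $(c-1)/2$ at $c=1$. The induction hypothesis now yields $f_{a,b,c}\sim f_{0,0,1}$ and the proof is complete.

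The hard part is precisely this coefficient bookkeeping: a naive application of~\eqref{delta} would produce terms with $a\in\{-1,5\}$ or $c<0$, for which we have no direct control inside $U_6$. The three fortuitous vanishings observed above are exactly what confines the recurrence to $U_6$. Only the first-order magnetism of $F_6$ is used in this derivation; the doubly magnetic strengthening of Theorem~\ref{th2} is not invoked here.
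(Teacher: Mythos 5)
Your proposal is correct and follows essentially the same route as the paper's proof: the same seed $f_{0,0,1}-f_{0,-3,3}=1728F_6$ supplied by Theorem~\ref{th2}, the same treatment of the slices $c=0,1$ (you merely route $f_{2,-1,1}$ through the intermediate $f_{1,-2,2}$, where the paper packages the derivatives $\delta f_{1,-1,1}$ and $\delta f_{0,-2,2}$ into a single identity), and the identical induction on $c$ via the rearranged form of \eqref{delta}. Your explicit bookkeeping of the vanishing prefactors $(4-a)/12$ at $a=4$ and $a/12$ at $a=0$, which confines the recursion to $U_6$, is a point the paper leaves implicit.
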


\begin{remark}
\label{rem3}
In fact, it seems that the space $U_6^0$ possesses the \emph{strongly} magnetic property: the anti-derivative of any difference of two $f_{a,b,c}$ from $U_6$ has an integral $q$-expansion.
\end{remark}

\begin{proof}
For $c=0$, we only have two elements $f_{3,0,0}$ and $f_{1,1,0}$ in $U_6$,
and $f_{3,0,0}\sim f_{1,1,0}$ since $ f_{3,0,0}- f_{1,1,0}=6\delta f_{2,0,0}$.
Moreover, they are both strongly equivalent to $f_{0,0,1}$, because $f_{1,1,0}-f_{0,0,1}=3\delta E_4$.

For $c=1$, we find out that $f_{0,0,1}$, $f_{2,-1,1}$ and $f_{4,-2,1}$ are in $U_6$.
Then $f_{4,-2,1}$ is strongly equivalent to any of $f_{3,0,0}$, $f_{1,1,0}$ and $f_{0,0,1}$ in accordance with $ f_{4,-2,1}-f_{3,0,0}=3\delta f_{4,-1,0}$ and the above.
With the help of Theorem~\ref{th2} and derivation
$$
f_{2,-1,1}-f_{0,0,1}
=4\delta f_{1,-1,1}-4\delta f_{0,-2,2}+2(f_{1,1,0}-f_{0,0,1})-4608F_6,
$$
we see that the same is true for $f_{2,-1,1}$.

We have just shown that any element in the subspace $U_6^0$ generated by $f_{a,b,c}$ with $c\in\{0,1\}$ does have the (strongly) magnetic property.
For the rest of our theorem, we proceed by induction over $c$ using the following consequence of equation~\eqref{delta} when $k=4$:
\begin{equation*}
\frac{b}{3} f_{a,b-1,c+1}=-\delta f_{a,b,c}+\frac{4-a}{12}f_{a+1,b,c}-\frac{a}{12}f_{a-1,b+1,c}-\frac{c}{2} f_{a,b+2,c-1}.
\qedhere
\end{equation*}
\end{proof}

\section{A magnetic extension of the field of quasi-modular forms}
\label{non-surj}

The functions $\tau,q,E_2,E_4,E_6$ are algebraically independent over $\mathbb C$ (see \cite{Ma69,Re66}).
We can identify the differential field $\mathbb C\<\tau,q,E_2,E_4,E_6\>$ generated by them over $\mathbb C$ with the differential field $\cK=\mathbb C\<\tau,q,X,Y,Z\>$ equipped with the derivation
$$
D=\frac1{2\pi i}\,\frac\partial{\partial\tau}+q\frac\partial{\partial q}
+\frac1{12}(X^2-Y)\frac\partial{\partial X}+\frac13(XY-Z)\frac\partial{\partial Y}+\frac12(XZ-Y^2)\frac\partial{\partial Z}.
$$
Our goal is to demonstrate that the elements
$$
v_1=\frac{XZ}{Y}-Y \quad\text{and}\quad v_2=\frac{XY^2}{Z}-Z,
$$
corresponding to $f_{1,-1,1}-f_{0,1,0}$ and $f_{1,2,-1}-f_{0,1,0}$, do not have $D$-anti-derivatives in $\cK$
(not even in $\cK\<D^{-1}v_2\>$ and $\cK\<D^{-1}v_1\>$, respectively).
This follows trivially from noticing that $\ord_Yv_1=-1$ and $\ord_Zv_2=-1$, so that if either $D^{-1}v_1$ or $D^{-1}v_2$ existed then $\ord_YD^{-1}v_1<0$ and $\ord_ZD^{-1}v_2<0$, hence $\ord_Yv_1=\ord_YD(D^{-1}v_1)\le-2$ and similarly $\ord_Zv_2\le-2$, contradiction.

By \cite[Lemma 3.9]{Ka57} applied twice, the anti-derivatives
$$
\tilde E_{4a}=\delta^{-1}(f_{1,-1,1}-f_{0,1,0})
\quad\text{and}\quad
\tilde E_{4b}=\delta^{-1}(f_{1,2,-1}-f_{0,1,0})
$$
are algebraically independent over the field $\mathbb C\<\tau,q,E_2,E_4,E_6\>$, the extended differential field
$$
\mathbb C\<\tau,q,E_2,E_4,E_6,\tilde E_{4a},\tilde E_{4b}\>
$$
has transcendence degree 7 over $\mathbb C$ and is a Picard--Vessiot extension of the differential field $\mathbb C\<\tau,q,E_2,E_4,E_6\>$. Again, by identifying the former through the isomorphism
$$
\varphi\colon E_2\mapsto X, \; E_4\mapsto Y, \; E_6\mapsto Z, \; \tilde E_{4a}\mapsto S, \; \tilde E_{4b}\mapsto T
$$
with the differential field $\hat{\cK}=\mathbb C\<\tau,q,X,Y,Z,S,T\>$ equipped with the derivation
\begin{align*}
\hat D&=\frac1{2\pi i}\,\frac\partial{\partial\tau}+q\frac\partial{\partial q}
+\frac1{12}(X^2-Y)\frac\partial{\partial X}+\frac13(XY-Z)\frac\partial{\partial Y}+\frac12(XZ-Y^2)\frac\partial{\partial Z}
\\ &\qquad
+\biggl(\frac{XZ}{Y}-Y\biggr)\frac\partial{\partial S}
+\biggl(\frac{XY^2}{Z}-Z\biggr)\frac\partial{\partial T},
\end{align*}
we want to demonstrate that the element
$$
v_3=\frac{X^2Z}{Y}-Z
$$
corresponding to $f_{2,-1,1}-f_{0,0,1}$ does not have a $\hat D$-anti-derivative in $\hat{\cK}$.

Assume on the contrary that there is an element $u_3\in\hat{\cK}$ such that $\hat Du_3=v_3$.
Notice that the functions $\tau$, $q=e^{2\pi i\tau}$, $E_2(\tau)$, $E_4(\tau)$ and $E_6(\tau)$ are all analytic at $\tau=\rho=e^{2\pi i/3}$, the latter three having the values
$$
E_2(\rho)=\frac{2\sqrt3}{\pi}, \quad E_4(\rho)=0, \quad E_6(\rho)=\biggl(\frac{3\Gamma(\frac13)^6}{8\pi^4}\biggr)^3.
$$
With the help of Ramanujan's system \eqref{rama-DE} we find out that
$$
E_4(\tau)=-\frac{2\pi i}3\,E_6(\rho)(\tau-\rho)+O\bigl((\tau-\rho)^2\bigr) \quad\text{as}\; \tau\to\rho,
$$
so that
$$
\begin{aligned}
f_{1,-1,1}-f_{0,1,0}
&=\frac{3iE_2(\rho)}{2\pi}\,\frac1{\tau-\rho}+O(1),
\\
f_{2,-1,1}-f_{0,0,1}
&=\frac{3iE_2(\rho)^2}{2\pi}\,\frac1{\tau-\rho}+O(1) 
\end{aligned}
\quad\text{as}\; \tau\to\rho
$$
and $f_{1,2,-1}-f_{0,1,0}$ is analytic at $\tau=\rho$. In turn, this implies that
$$
\begin{aligned}
\tilde E_{4a}&=-3E_2(\rho)\ln(\tau-\rho)+g_1(\tau),
\\
\delta^{-1}(f_{2,-1,1}-f_{0,0,1})
&=-3E_2(\rho)^2\ln(\tau-\rho)+g_3(\tau)
\end{aligned}
\quad\text{as}\; \tau\to\rho
$$
for some functions $g_1(\tau)$ and $g_3(\tau)$ analytic at $\tau=\rho$, while $\tilde E_{4b}(\tau)$ is analytic there.
To summarise, the function
$$
\delta^{-1}(f_{2,-1,1}-f_{0,0,1})
-\frac{2\sqrt3}\pi\,\tilde E_{4a}(\tau)
=\delta^{-1}(f_{2,-1,1}-f_{0,0,1})
-E_2(\rho)\tilde E_{4a}(\tau)
$$
is analytic at $\tau=\rho$, hence only representable as a rational function of $\tau,q,E_2,E_4,\allowbreak E_6,\tilde E_{4b}$.
Using the isomorphism $\varphi$ we conclude that
$$
u=u_3-\frac{2\sqrt3}\pi\,S\in\hat{\cK}
$$
is a polynomial in $\tau,q,X,Y,Z,T$. The latter is seen to be impossible after the operator $\hat D$ is applied to $u$ and to $u_3-\dfrac{2\sqrt3}\pi\,S$ leading to a rational expression of $S$ in terms of the other generators of~$\hat{\cK}$.
The contradiction we arrive at implies that the anti-derivative
$$
\tilde E_6=\delta^{-1}(f_{2,-1,1}-f_{0,0,1})
$$
is transcendental over the field $\mathbb C\<\tau,q,E_2,E_4,E_6,\tilde E_{4a},\tilde E_{4b}\>$.
On replacing the generators of the latter with the anti-derivatives of magnetic modular forms from Theorems~\ref{th1} and~\ref{th2} we obtain the following result.

\begin{theorem}
\label{th555}
The differentially closed field
$$
\mathbb C\<\tau,q,E_2,E_4,E_6,\tilde F_{4a},\tilde F_{4b},\tilde F_6\>,
$$
generated by $\tau$, $q=e^{2\pi i\tau}$, the Eisenstein series \eqref{eis-ser} and the anti-derivatives
$$
\tilde F_{4a}=\delta^{-1}\biggl(\frac{\Delta}{E_4^2}\biggr), \quad
\tilde F_{4b}=\delta^{-1}\biggl(\frac{E_4\Delta}{E_6^2}\biggr), \quad
\tilde F_6=\delta^{-1}\biggl(\frac{E_6\Delta}{E_4^3}\biggr)
$$
with integral coefficients in their $q$-expansions,
has transcendence degree $8$ over $\mathbb C$.
\end{theorem}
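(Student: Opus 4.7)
The plan is to reduce Theorem~\ref{th555} to the transcendence-degree-$8$ statement for the field
$\mathbb C\<\tau,q,E_2,E_4,E_6,\tilde E_{4a},\tilde E_{4b},\tilde E_6\>$ that the paragraphs preceding the theorem already establish, by means of Kolchin's lemma and the local analysis at $\tau=\rho$. Concretely, I would prove the equality
$$
\mathbb C\<\tau,q,E_2,E_4,E_6,\tilde F_{4a},\tilde F_{4b},\tilde F_6\>
=\mathbb C\<\tau,q,E_2,E_4,E_6,\tilde E_{4a},\tilde E_{4b},\tilde E_6\>
$$
as extensions of $\mathbb C\<\tau,q,E_2,E_4,E_6\>$ by exhibiting explicit linear relations between the two sets of extra generators modulo the ground field; the transcendence-degree claim then transfers automatically.

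For the weight-$4$ generators I would use identities already at hand in Section~\ref{V4}. Starting from $1728F_{4a}=f_{0,1,0}-f_{0,-2,2}$ and combining with the identity $-\tfrac{1}{3}f_{0,-2,2}=-\delta f_{0,-1,1}+\tfrac16 f_{1,-1,1}-\tfrac12 f_{0,1,0}$ obtained in the derivation of Theorem~\ref{th:w4}, I would rewrite $1728 F_{4a}=-3\delta f_{0,-1,1}+\tfrac12(f_{1,-1,1}-f_{0,1,0})$ and apply $\delta^{-1}$ to get $1728\tilde F_{4a}=-3f_{0,-1,1}+\tfrac12\tilde E_{4a}$. Likewise, from the identity $f_{1,2,-1}-f_{0,1,0}=6\delta f_{0,2,-1}-5184F_{4b}$ recorded in the same derivation, I would obtain $\tilde E_{4b}=6f_{0,2,-1}-5184\tilde F_{4b}$. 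Each of the pairs $(\tilde F_{4a},\tilde E_{4a})$ and $(\tilde F_{4b},\tilde E_{4b})$ thus differs by a rational function of $E_2,E_4,E_6$ up to a nonzero rational scalar, and so they generate the same extension of $\mathbb C\<\tau,q,E_2,E_4,E_6\>$.

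For the weight-$6$ generator I would invoke the identity
$$
f_{2,-1,1}-f_{0,0,1}=4\delta f_{1,-1,1}-4\delta f_{0,-2,2}+2(f_{1,1,0}-f_{0,0,1})-2304F_6
$$
used inside the proof of Theorem~\ref{th:w6}, together with $f_{1,1,0}-f_{0,0,1}=3\delta E_4$. Anti-differentiating term by term on the cuspidal subspace yields $\tilde E_6=4f_{1,-1,1}-4f_{0,-2,2}+6E_4-2304\tilde F_6$, so $\tilde F_6$ and $\tilde E_6$ also generate the same extension of $\mathbb C\<\tau,q,E_2,E_4,E_6\>$.

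Once the two fields are matched, the theorem follows from what is already shown above: two applications of \cite[Lemma~3.9]{Ka57} give algebraic independence of $\tilde E_{4a},\tilde E_{4b}$ over $\mathbb C\<\tau,q,E_2,E_4,E_6\>$, and the local-analysis-at-$\rho$ contradiction argument upgrades this to transcendence of $\tilde E_6$ over the seven-variable field $\mathbb C\<\tau,q,E_2,E_4,E_6,\tilde E_{4a},\tilde E_{4b}\>$. No new obstacle arises; the whole reduction is a bookkeeping exercise in the linear relations among the $f_{a,b,c}$ at weights $4$ and $6$ already written down in Section~\ref{V4}, so the only genuinely non-trivial input is the transcendence argument at $\tau=\rho$, which is completed in the preceding discussion.
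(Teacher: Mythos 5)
Your proposal is correct and follows essentially the same route as the paper: the authors also deduce Theorem~\ref{th555} from the transcendence-degree computation for $\mathbb C\<\tau,q,E_2,E_4,E_6,\tilde E_{4a},\tilde E_{4b},\tilde E_6\>$ by replacing the generators $\tilde E_{4a},\tilde E_{4b},\tilde E_6$ with $\tilde F_{4a},\tilde F_{4b},\tilde F_6$, and your explicit identities (which are right, up to harmless constants of integration coming from the normalisation $\delta^{-1}v=\int_0^q v\,\d q/q$) merely spell out the bookkeeping the paper leaves implicit.
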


\begin{remark}
\label{rem-alt}
Another way to see that no $u_3$ exists in $\hat{\cK}$ such that $\hat Du_3=v_3$ is by casting $u_3$ in the form $p/q$ with 
$p,q$ in the ring $\mathcal R[S]$, where $\mathcal R=\mathbb C\<\tau,q,X,Y,Z,T\>$, and $\gcd(p,q)=1$.
After clearing the denominators in $\hat D(p/q)=v_3$ and comparing the degree in $S$ on both sides, one concludes that $\hat Dq=uq$ for some $u\in\mathcal R$ (that is, independent of~$S$).
This leads to conclusion $q\in\mathcal R$, so that $u_3$ is a polynomial in~$S$.
Finally, the equation $\hat Du_3=X^2Z/Y-Z$ is seen to be impossible by comparing the order in~$Y$ on both sides.
\end{remark}

\begin{exercise}
\label{ex1}
We leave to the reader the exercise to prove that the anti-derivative of $\tilde F_6$ (in turn, the second anti-derivative of $F_6$) is transcendental over the field in Theorem~\ref{th555}.
\end{exercise}

\section{Half-integral weight weakly holomorphic modular forms}
\label{w5/2}

Following the ideas in \cite{LN19}, we will cast magnetic modular forms of weight $2k$ as the images of weakly holomorphic eigenforms of weight $k+1/2$ under the Shimura--Borcherds lift.
In our settings, an input for the lift is a form $f(\tau)=\sum_{n\gg-\infty}a(n)q^n$ from the Kohnen plus space $M_{k+1/2}^{!,+}$ (meaning that $a(n)$ vanishes when $(-1)^kn\not\equiv0,1\bmod4$);
the output is the meromorphic modular form $\Psi(f)(\tau)=\sum_{n>0}A(n)q^n$ with
\begin{equation}
A(n)=\sum_{d\mid n}\bigg(\frac dD\bigg)d^{k-1}a(|D|\,n^2/d^2),
\label{a->A}
\end{equation}
where $D=D_k=1$ for $k$ even (so that the Kronecker--Jacobi symbol $\big(\frac dD\big)$ is always~1) and $D=D_k=-3$ for $k$~odd.
In other words,
\begin{equation}
\Psi=\Psi_k\colon f=\sum_{n\gg-\infty}a(n)q^n
\mapsto F=\sum_{n>0}q^n\sum_{d\mid n}\bigg(\frac d{D_k}\bigg)d^{k-1}a(|D_k|\,n^2/d^2),
\label{SB}
\end{equation}
and the latter expression is just $F=\sum_{n>0}q^n\sum_{d\mid n}d^{k-1}a(n^2/d^2)$ when $k$~is even.
We will also distinguish the Kohnen plus cuspidal space $S_{k+1/2}^{!,+}$ in $M_{k+1/2}^{!,+}$ by imposing the additional constraint $a(0)=0$.

Our examples of forms from $M_{k+1/2}^{!,+}$ with $k=2$ involved in the proof of Theorem~\ref{th1} are the following three:
\begin{align*}
g_0(\tau)&=\theta(\tau)\,(\theta(\tau)^4-20 E_{2,4}(\tau))
\\
&=1 - 10q - 70q^4 - 48q^5 - 120q^8 - 250q^9 - \dotsb - 550q^{16} - \dotsb
\\ &\quad
- 1210q^{25} - \dotsb - 1750q^{36} - \dotsb - 3370q^{49} -\dotsb,
\displaybreak[2]\\
g_1(\tau)&=\frac{\theta(\tau)E_4(4\tau)^2E_6(4\tau)}{\Delta(4\tau)}
\\
&=q^{-4} + 2q^{-3} + 2 - 196884q^4 - \dotsb - 85975040q^9 - \dotsb
\\ &\quad
- 86169224844q^{16} - \dotsb - 51186246451200q^{25} - \dotsb
\\ &\quad
- 35015148280961780q^{36} - \dotsb - 21434928162930081792q^{49} - \dotsb,
\displaybreak[2]\\
g_2(\tau)&=\frac{g_0(\tau)E_4(4\tau)^3}{\Delta(4\tau)}
\\
&=q^{-4} - 10q^{-3} + 674 - 7488q + 144684q^4 - \dotsb - 224574272q^9 - \dotsb
\\ &\quad
- 42882054732q^{16} - \dotsb - 63793268216640q^{25} - \dotsb
\\ &\quad
- 31501841125150388q^{36} - \dotsb - 22385069000981561664q^{49} - \dotsb,
\end{align*}
where $\theta(\tau)=\sum_{n\in\mathbb Z}q^{n^2}$ and
\begin{equation}
E_{2,4}(\tau)
=\frac{-E_2(\tau)+3E_2(2\tau)-2E_2(4\tau)}{24}
=\sum_{\substack{n=1\\n\;\text{odd}}}^\infty q^n\sum_{d\mid n}d.
\label{E24}
\end{equation}
The modular form $g_0(\tau)$ is known by the name of normalised Cohen--Eisenstein series of weight~$5/2$.

\begin{lemma}
\label{lem1}
\begin{enumerate}[(a)]
\item[\textup{(a)}]
The weight $5/2$ weakly holomorphic modular form
$$
f_{4a}(\tau) = \frac{7}{8} g_0(\tau) + \frac{1}{768} g_1(\tau) - \frac{1}{768} g_2(\tau)
= \frac{1}{64}q^{-3} + q - 506q^4 + \dotsb
$$
lies in the Kohnen plus cuspidal space $S^{!,+}_{5/2}$ and its Shimura--Borcherds lift $\Psi(f_{4a})$ is $F_{4a}=\Delta/E_4^2$.
\item[\textup{(b)}]
The weight $5/2$ weakly holomorphic modular form
$$
f_{4b}(\tau) = \frac{19}{18} g_0(\tau) -\frac{5}{648} g_1(\tau) - \frac{1}{648} g_2(\tau)
= -\frac{1}{108}q^{-4} + q + 1222q^4 + \dotsb
$$
lies in the Kohnen plus cuspidal space $S^{!,+}_{5/2}$ and its Shimura--Borcherds lift $\Psi(f_{4b})$ is $E_4\Delta/E_6^2$.
\end{enumerate}

\noindent
Moreover, $f_{4a}\in \frac{1}{64} q^{-3}\Z[[q]]$ and $f_{4b}\in \frac{1}{108} q^{-4}\Z[[q]]$.
\end{lemma}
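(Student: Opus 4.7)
The proof splits naturally into four stages: establishing membership of $g_0,g_1,g_2$ in $M^{!,+}_{5/2}$, verifying cuspidality of the combinations $f_{4a},f_{4b}$, identifying the Shimura--Borcherds lifts with the claimed meromorphic forms, and proving the integrality assertion.

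For the first stage, I would check that each $g_i$ is a weakly holomorphic modular form of weight $5/2$ on $\Gamma_0(4)$: $\theta$ has weight $1/2$, while $E_{2,4}$, $E_4(4\tau)$, $E_6(4\tau)$, $\Delta(4\tau)$ are holomorphic of weights $2,4,6,12$ on $\Gamma_0(4)$, so each weight totals $5/2$ and the only possible poles are at cusps. That $g_0$ is the normalised Cohen--Eisenstein series and hence lies in the Kohnen plus space is classical. For $g_1$ and $g_2$, the Kohnen support $\{n\equiv 0,1\pmod 4\}$ follows from the product structure: $\theta$ is supported on perfect squares, while $E_4(4\tau), E_6(4\tau), \Delta(4\tau)^{-1}$ are supported on $4\Z$, and squares are $\equiv 0,1\pmod 4$. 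Cuspidality of $f_{4a}$ and $f_{4b}$ then reduces to the constant-term identities $\tfrac{7}{8}\cdot 1 + \tfrac{1}{768}(2-674)=0$ and $\tfrac{19}{18}\cdot 1 - \tfrac{1}{648}(10+674)=0$, both verified directly from the listed $q$-expansions.

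For the Shimura--Borcherds identification, I would compute the lift using \eqref{a->A} with $k=2$, namely $A(n)=\sum_{d\mid n} d\,a(n^2/d^2)$, and compare with the Fourier expansions of $F_{4a}=\Delta/E_4^2$ and $E_4\Delta/E_6^2$ through finitely many coefficients. This is sufficient because each image is a priori a meromorphic modular form of weight $4$ on $\mathrm{SL}_2(\Z)$ whose pole divisor is controlled by the principal part of the input, and such forms with a fixed bound on the pole order form a finite-dimensional space.

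The integrality is the arithmetic heart of the lemma. Writing
\[
64 f_{4a} = 56 g_0 + \tfrac{1}{12}(g_1 - g_2), \qquad 108 f_{4b} = 114 g_0 - \tfrac{1}{6}(5 g_1 + g_2),
\]
the claim reduces to the coefficient-wise congruences $g_1\equiv g_2\pmod{12}$ and $5g_1+g_2\equiv 0\pmod 6$. Factoring $E_4(4\tau)^2/\Delta(4\tau)$ out of $g_1\mp g_2$ and using $E_4(4\tau)\equiv 1\pmod{240}$, $E_6(4\tau)\equiv 1\pmod{504}$, these reduce further to $\theta-g_0\equiv 0\pmod{12}$ and $5\theta+g_0\equiv 0\pmod 6$. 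Expanding via $g_0=\theta^5-20\theta E_{2,4}$, it suffices to establish $\theta^4-1-20E_{2,4}\equiv 0\pmod{12}$ and $5+\theta^4-20E_{2,4}\equiv 0\pmod 6$ coefficient-wise; Jacobi's four-squares identity $r_4(n)=8\sum_{d\mid n,\,4\nmid d}d$, which gives $r_4(n)=8\sigma_1(n)$ for odd $n$ and $r_4(n)=24\sigma_1(m)$ for $n=2^am$ with $m$ odd and $a\ge 1$, settles both congruences by a case split on the parity of $n$. I expect this arithmetic step to be the main obstacle, since the preceding three stages are essentially bookkeeping within the Kohnen--Borcherds framework.
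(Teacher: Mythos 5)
Your proposal is correct, and its first three stages (plus-space membership via the support condition, cuspidality via the constant-term cancellations, and identification of the lifts by comparing finitely many coefficients inside a finite-dimensional space of weight-$4$ meromorphic forms with prescribed pole divisor) coincide with what the paper does. The integrality step, however, is genuinely different. The paper disposes of it in one line by quoting the Duke--Jenkins integral basis: it rewrites $64f_{4a}=f_{14+1/2}^*(\tau)/\Delta(4\tau)$ and $-108f_{4b}=\bigl(f_{14+1/2}(\tau)(j(4\tau)-674)+10f_{14+1/2}^*(\tau)\bigr)/\Delta(4\tau)$, so integrality is inherited from the tabulated holomorphic forms $f_{b+1/2},f_{b+1/2}^*$ together with $\Delta(4\tau)^{-1}\in q^{-4}\Z[[q]]$. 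You instead keep the original generators and reduce $64f_{4a}=56g_0+\tfrac1{12}(g_1-g_2)$ and $108f_{4b}=114g_0-\tfrac16(5g_1+g_2)$ to the congruences $g_1\equiv g_2\pmod{12}$ and $5g_1+g_2\equiv0\pmod 6$, which (after factoring out the integral unit series $E_4(4\tau)^2/\Delta(4\tau)\in q^{-4}(1+q\Z[[q]])$ and using $E_4\equiv E_6\equiv1\pmod{12}$) come down to $\theta^4-1-20E_{2,4}\equiv0\pmod{12}$ and $\theta^4+5-20E_{2,4}\equiv0\pmod 6$, both immediate from Jacobi's four-square formula; I checked these and they hold (odd $n$ gives $-12\sigma_1(n)$, even $n$ gives $24\sigma_1(m)$). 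Your route is longer but self-contained and makes the arithmetic mechanism visible, whereas the paper's is shorter at the cost of outsourcing integrality to the table in Duke--Jenkins; note only that your factoring tacitly identifies the undefined $f_0$ in the paper's formula for $g_2$ with $g_0$ (i.e.\ $g_2=g_0\,j(4\tau)$), which the displayed $q$-expansion confirms, and that the vanishing of the $q^{-4}$ coefficient of $g_1-g_2$ should be recorded to get $64f_{4a}\in q^{-3}\Z[[q]]$ rather than merely $q^{-4}\Z[[q]]$.
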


The identification $\Psi(f_{4a})=F_{4a}$ is already in Borcherds' \cite[Example~14.4]{Bo98}.

\begin{proof}
Indeed, we only need to check that $f_{4a},f_{4b}$ have vanishing constant term and that the first three coefficients in the $q$-expansions of $\Psi(f_{4a})$, $\Psi(f_{4b})$ agree with those of the predicted meromorphic modular forms; we choose to check the first seven coefficients.
%The integrality of the $q$-expansions of $64f_{4a}$ and $108f_{4b}$ follows from its verification up to $q^3$ and the integrality of those of $g_0$, $g_1$ and~$g_2$.

For the integrality statement, we use the alternative expressions
$$
64f_{4a}(\tau)= \frac{f_{14+1/2}^*(\tau)}{\Delta(4\tau)}
$$
and 
$$
-108f_{4b}(\tau)= \frac{ f_{14+1/2}(\tau)\,(j(4\tau)-674)+10 f_{14+1/2}^*(\tau)}{\Delta(4\tau)},
$$
where the forms $f_{b+1/2}(\tau), f^*_{b+1/2}(\tau)$ are the holomorphic modular forms of weight $b+1/2$ with integral $q$-expansions from the table in \cite[Appendix]{DJ08} and $j(\tau)=E_4(\tau)^3/\Delta(\tau)$ is the elliptic modular invariant.
\end{proof}

As we will see further, for certain forms $\sum_{n\gg-\infty}a(n)q^n\in S^{!,+}_{5/2}$ with integral $q$-expansions (in particular, for the forms $64f_{4a}$ and $108f_{4b}$) one can make use of Hecke operators to conclude with the divisibility $n\mid a(n^2)$ for $n>0$.
This readily implies that $64F_{4a}$ and $108F_{4b}$ in Theorem~\ref{th1} are strongly magnetic modular forms, since the relation in~\eqref{a->A} translates the divisibility into
$$
\frac{A(n)}n
=\sum_{d\mid n}\frac{a(n^2/d^2)}{n/d}
=\sum_{d\mid n}\frac{a(d^2)}{d}\in\mathbb Z.
$$
A detailed analysis below reveals that the factors $64$ and $108$ can be also removed.

\section{The square part and Hecke operators}
\label{sq}

We refer the reader to \cite{DJ08} and \cite{BGK15} for the definition of Hecke operators $\cT_p$ and $T_{p^2}$ on integral weight $2k$ and half-integral weight $k+1/2$ modular forms (including weakly holomorphic or meromorphic), respectively.
As in the case of the Shimura--Borcherds lift $\Psi=\Psi_k$ in \eqref{SB}, these definitions make perfect sense for \emph{any} Laurent series $f=\sum_{n\gg-\infty}a(n)q^n$, not necessarily of modular origin but with the weight $2k$ or $k+1/2$ additionally supplied.
We refer to the finite sum $\sum_{n<0}a(n)q^n$ as to the principal part of~$f$.
We take
$$
f\md U_p=\sum_{n\gg-\infty}a(np)q^n,
\quad
f\md V_p=\sum_{n\gg-\infty}a(n)q^{np},
\quad
f\md\chi=\sum_{n\gg-\infty}\chi(n)a(n)q^n
$$
for a character $\chi\colon\Z\to\mathbb  C$, and define
$$
f\md \cT_p=f\md(\cT_p,2k)=f\md U_p+p^{2k-1} V_p
$$
and
$$
f\md T_{p^2}=f\md(T_{p^2},k+1/2)=f\md U_p^2+p^{k-1}\chi_p+p^{2k-1} V_p^2,
$$
where $\chi_p(n)=\chi_{p,k}(n)=\big(\frac{(-1)^kn}p\big)$ is the Kronecker--Jacobi symbol.

A simple calculation shows that $\Psi_k(f)\md(\cT_p,2k)=\Psi_k\big(f\md(T_{p^2},k+1/2)\big)$, which we can reproduce in a simplified form
\begin{equation*}
\Psi(f)\md\cT_p=\Psi(f\md T_{p^2})
\end{equation*}
when $k$ is fixed.

\begin{lemma}\label{Cong1}
Given a positive integer $k$, assume that there are no cusp forms of weight $2k$.
For a prime $p$, let $f\in M^{!,+}_{k+1/2}$ have $p$-integral coefficients and satisfy $p^2>-\ord_q(f)$.
Then
$$
f\md T_{p^2}^n\equiv 0 \bmod p^{(k-1)n}.
$$
\end{lemma}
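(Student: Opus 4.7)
The plan is to reduce the congruence to a divisibility on the principal part of $f\md T_{p^2}^n$, and then to verify the latter by tracking the orbit of individual monomials under $T_{p^2}$.

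First I would establish the structural reduction. Since $T_{p^2}$ preserves $M^{!,+}_{k+1/2}$ and preserves $p$-integrality, every iterate $f\md T_{p^2}^n$ lies in $M^{!,+}_{k+1/2}$ with $p$-integral Fourier coefficients. The Shimura--Kohnen isomorphism $S^{+}_{k+1/2}\simeq S_{2k}$, combined with the hypothesis $S_{2k}=0$, forces $S^{+}_{k+1/2}=0$, so that a form in $M^{!,+}_{k+1/2}$ is determined by its principal part. Granting a Miller-type integral basis $\{h_m\}$---that is, a form $h_m=q^{-m}+O(q)$ with $p$-integral $q$-expansion for every admissible $m>0$---any $p$-integral element of $M^{!,+}_{k+1/2}$ whose principal part is divisible by $p^N$ is itself divisible by $p^N$. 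It therefore suffices to prove that each coefficient of $q^M$ with $M<0$ in $f\md T_{p^2}^n$ has $p$-adic valuation at least $(k-1)n$.

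Next I would carry out the orbit-level computation. Using $T_{p^2}=U_p^2+p^{k-1}\chi_p+p^{2k-1}V_p^2$, one checks that $q^N\md T_{p^2}$ is holomorphic whenever $N\ge0$, so only the principal part of $f$ can contribute to the principal part of $f\md T_{p^2}^n$. For $N<0$ with $|N|<p^2$---precisely the range enforced by $p^2>-\ord_q(f)$---the operator $T_{p^2}$ stabilises the orbit $\{q^{Np^{2j}}:j\ge0\}$ via
\[
U_p^2\,q^{Np^{2j}}=q^{Np^{2(j-1)}}\mathbf{1}[j\ge1],\quad \chi_p\,q^{Np^{2j}}=\chi_p(N)\,q^N\mathbf{1}[j=0],\quad V_p^2\,q^{Np^{2j}}=q^{Np^{2(j+1)}}.
\]
Writing $c_{n,j}$ for the coefficient of $q^{Np^{2j}}$ in $q^N\md T_{p^2}^n$, the recursion
\[
c_{n+1,j}=c_{n,j+1}+p^{k-1}\chi_p(N)\,c_{n,0}\mathbf{1}[j=0]+p^{2k-1}c_{n,j-1},\qquad c_{0,0}=1,\ c_{0,j}=0\ \text{for}\ j\ge1,
\]
will yield by induction on $n$ the valuation bound $v_p(c_{n,j})\ge n(k-1)+jk$, because under the inductive hypothesis the three summands on the right have $p$-valuation at least $n(k-1)+(j+1)k$, $(n+1)(k-1)$, and $(n+1)(k-1)+jk$ respectively, each of which is $\ge(n+1)(k-1)+jk$. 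Specialising $j=0$ and extending by $\Q$-linearity in $f$ delivers the required principal-part bound, and together with the first step this closes the argument.

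The main obstacle I anticipate is the input to the reduction: producing, for every admissible pole order, a $p$-integral element of $M^{!,+}_{k+1/2}$ with exactly that prescribed principal part. In the weight $5/2$ setting of Section~\ref{w5/2} this can be arranged by hand by multiplying $g_0,g_1,g_2$ with suitable powers of $j(4\tau)$ to shift the pole order upward while retaining integral $q$-expansions; in general one has to appeal to the Borcherds--Zagier duality between weakly holomorphic forms of weights $k+1/2$ and $3/2-k$, which itself is a refinement of the Shimura--Kohnen correspondence already invoked above.
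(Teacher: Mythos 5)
Your proof is correct and follows essentially the same route as the paper: reduce to the principal part via $\dim S_{2k}=0$ together with the Duke--Jenkins integral basis, observe that the hypothesis $p^2>-\ord_q(f)$ makes $U_p^2$ annihilate the principal part, and count the powers of $p$ carried by the $\chi_p$ and $V_p^2$ contributions; your explicit two-index recursion for $c_{n,j}$ is simply a self-contained substitute for the operator identity $T_{p^2}^n=\sum\alpha_{a,b,c,r}\,p^{(2k-1)c+(k-1)b}\,U_{p^2}^{a-r}\chi_p^bV_{p^2}^{c-r}$ that the paper imports from Bringmann--Guerzhoy--Kane. One small slip of phrasing: every monomial $q^{Np^{2j}}$ with $j\ge0$ (not only $j=0$) belongs to the principal part of $f\md T_{p^2}^n$, but your bound $v_p(c_{n,j})\ge n(k-1)+jk$ already covers all of them, so nothing is lost.
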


\begin{proof}
Following the argument in \cite[proof of Lemma~3.1]{BGK15}, we can write
\begin{equation}
T_{p^2}^n
=\sum_{\substack{a,b,c,r\ge0\\a+b+c=n\\r\le\min\{a,c\}}}
\alpha_{a,b,c,r}\cdot p^{(2k-1)c+(k-1)b}\cdot U_{p^2}^{a-r}\chi_p^bV_{p^2}^{c-r},
\label{BGK}
\end{equation}
where $\alpha_{a,b,c,r}$ are some integers.
This writing can be easily deduced from $V_{p^2}\chi_p =\chi_p U_{p^2}=0$ and the fact that $V_{p^2}U_{p^2}$ is the identity.
We only need to analyse the principal part of $f\md T_{p^2}^n$ which, by the hypothesis $\dim S_{2k}=0$, determines it uniquely.

If $r<a$, then $f\md U_{p^2}^{a-r}\chi_p^b V_{p^2}^{c-r}$ has no principal part, because the latter is killed by a single  action of $U_{p^2}$ (since $a_{-p^2m}=0$ for any $m\ge 0$).
Therefore, we may assume that $a=r\le c$. This implies that $(2k-1)c+(k-1)b\ge (k-1)(2c+b)\ge(k-1)n$, hence the principal of $ f\md T_{p^2}^n$ part is divisible by $p^{(k-1)n}$.
This in turn implies that $f\md T_{p^2}^n=p^{(k-1)n}\cdot g$ for some $g\in \mathcal{M}_{k+1/2}^{!,+}$ with $p$-integral coefficients, since there is a basis $\{g_m=q^m+O(q):m\in\Z,\;(-1)^km\equiv0\}$ of $M^{!,+}_{k+1/2}$ whose elements have all coefficients integral (see \cite[Proposition~2]{DJ08}).
\end{proof}

In parallel with \eqref{SB}, define
\begin{equation*}
\Phi=\Phi_k\colon g=\sum_{n\gg-\infty}b(n)q^n
\mapsto \sum_{n>0}q^{|D_k|n^2}\sum_{d\mid n}\bigg(\frac d{D_k}\bigg)d^{k-1}\mu(d)b(n/d),
%\label{SB'}
\end{equation*}
where $\mu(\,\cdot\,)$ is the M\"obius function and, as before, $D_k=2\cdot(-1)^k-1\in\{1,-3\}$.
We further define the `square part' of a Laurent series $f=\sum_{n\gg-\infty}a(n)q^n$ as
$$
f^{\square}=\sum_{n>0} a(|D_k|n^2) q^{|D_k| n^2}.
$$
The definitions immediately lead to the following conclusions.

\begin{lemma}\label{sqr}
We have $\Phi (\Psi(f))=f^{\square}$.
In particular, if $\Psi(f)\in q\Z[[q]]$, then $f^{\square}\in q\Z[[q]]$.
\end{lemma}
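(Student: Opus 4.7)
The plan is to verify the identity $\Phi(\Psi(f))=f^{\square}$ by a direct expansion of the two sums followed by a Möbius inversion, and then to read off the integrality statement for free from the shape of the identity.

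First, I would simply unfold the definitions. Writing $\Psi(f)=\sum_{N>0}A(N)q^N$ with
$$
A(N)=\sum_{d\md N}\bigg(\frac d{D}\bigg)d^{k-1}a(|D|\,N^2/d^2),
$$
where $D=D_k$, and substituting into the definition of $\Phi$ yields
$$
\Phi(\Psi(f))=\sum_{n>0}q^{|D|n^2}\sum_{d\md n}\bigg(\frac d{D}\bigg)d^{k-1}\mu(d)\,A(n/d).
$$

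Next, I would substitute the formula for $A(n/d)$ and reorganise the resulting double sum according to the divisor $f$ of $n$ for which $a(|D|f^2)$ appears. Writing $n=def$ and using multiplicativity of the Kronecker–Jacobi symbol gives
$$
\Phi(\Psi(f))=\sum_{n>0}q^{|D|n^2}\sum_{f\md n}a(|D|f^2)\bigg(\frac{n/f}{D}\bigg)(n/f)^{k-1}\sum_{d\md (n/f)}\mu(d).
$$
The inner Möbius sum equals $[n/f=1]$, so only the contribution $f=n$ survives and we obtain $\sum_{n>0}a(|D|n^2)q^{|D|n^2}=f^{\square}$, which is the identity claimed.

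For the integrality conclusion, observe that each coefficient of $\Phi(\Psi(f))$ is an integer linear combination — with coefficients of the form $\mu(d)\bigl(\tfrac d{D}\bigr)d^{k-1}\in\Z$ — of the coefficients $A(N)$ of $\Psi(f)$. Hence if $\Psi(f)\in q\Z[[q]]$, then $\Phi(\Psi(f))=f^{\square}$ automatically lies in $q\Z[[q]]$. I do not foresee any real obstacle: the lemma is essentially the statement that the Shimura--Borcherds formula \eqref{a->A} is invertible on the square-indexed subseries by Möbius inversion, and this is the only mechanism at play.
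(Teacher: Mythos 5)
Your proof is correct and is exactly the ``immediate from the definitions'' verification the paper has in mind (the paper states the lemma without proof, remarking only that the definitions immediately lead to it): unfolding $\Phi\circ\Psi$, collapsing the double divisor sum via $\sum_{d\mid m}\mu(d)=[m=1]$, and noting that the coefficients of $\Phi(\Psi(f))$ are integer linear combinations of those of $\Psi(f)$. The only cosmetic issue is that you reuse the letter $f$ both for the input series and as a divisor-summation index; rename the index to avoid the clash.
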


Notice that $f_{4a}^\square, f_{4b}^\square \in q\Z[[q]]$ by this lemma, because both $F_{4a}=\Psi(f_{4a})$ and $F_{4b}=\Psi(f_{4b})$ are in $q\Z[[q]]$.

In addition to this, we list some other easily verifiable properties about the interaction of Hecke operators and square parts.

\begin{lemma}\label{Heck}
Given a Laurent series $f=\sum_{n\gg-\infty}a(n)q^n$ and positive integer $k$, the following statements are true.
\begin{enumerate}
\item[\textup{(a)}] $\Psi(f)\md\cT_p^n=\Psi(f\md T_{p^2}^n)$ for $n=1,2,\dots$\,.
\item[\textup{(b)}] $\Psi(f)=\Psi(f^\square)$.
\item[\textup{(c)}] 
$(f\md T_{p^2})^{\square}=f^{\square}\md T_{p^2}$ termwise, that is,
$(f\md U_{p^2})^{\square}=f^{\square}\md U_{p^2}$,
$(f\md V_{p^2})^{\square}=f^{\square}\md V_{p^2}$ and
$(f\md \chi_p)^{\square}=f^{\square}\md \chi_p$.
\item[\textup{(d)}] If the coefficients of $f$ are integral and $k\ge 2$, then $f\md T_{p^2}\equiv f\md U_p^2\bmod p$.
\end{enumerate}
\end{lemma}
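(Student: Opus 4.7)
The plan is to verify (a)--(d) directly from the defining formulas for $\Psi=\Psi_k$, the square-part operator $f\mapsto f^\square$, and the Hecke operators $U_p$, $V_p$, $\chi_p$, $\cT_p$, $T_{p^2}$ collected above; no genuinely new idea is required beyond index bookkeeping.

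For (a) I would induct on~$n$. The base case $n=1$ is the identity $\Psi(f)\md\cT_p=\Psi(f\md T_{p^2})$ recorded immediately above Lemma~\ref{Cong1}, and the inductive step is immediate on applying that base case to $f\md T_{p^2}^{n-1}$. For (b), unpacking \eqref{SB} shows that each coefficient of $\Psi(f)$ is built only from the values $a(|D_k|m^2)$ with $m\ge1$; these are exactly the coefficients retained (at the same exponents $|D_k|m^2$) by the square-part operator, so $\Psi(f)=\Psi(f^\square)$ term-by-term.

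For (c) I would expand the three termwise identities and match indices. The $\chi_p$ claim is a tautology: both sides equal $\sum_{m>0}\chi_p(|D_k|m^2)\,a(|D_k|m^2)\,q^{|D_k|m^2}$. For $V_{p^2}$ both sides equal $\sum_{n>0}a(|D_k|n^2)q^{|D_k|p^2n^2}$, and for $U_{p^2}$ both sides equal $\sum_{n>0}a(|D_k|p^2n^2)q^{|D_k|n^2}$; in this last one must only track that a nonzero coefficient of $f^\square$ at $q^{np^2}$ forces $np^2=|D_k|m^2$ for some $m>0$, which the relabelling $m=pn$ (or the variant $m=3n$ in the coincidence $D_k=-3$, $p=3$) supplies. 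Finally, for (d), the expansion $f\md T_{p^2}=f\md U_p^2+p^{k-1}(f\md\chi_p)+p^{2k-1}(f\md V_p^2)$ settles the congruence at once: for $k\ge2$ both $p^{k-1}$ and $p^{2k-1}$ are divisible by~$p$, and integrality of~$f$ is preserved by $\chi_p$ and~$V_p$.

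I do not foresee any substantive obstacle; the only care needed is the index matching in~(c), and even the potentially delicate coincidence $p=3$, $D_k=-3$ causes no trouble.
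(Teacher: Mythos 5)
Your verification is correct: all four parts do follow by direct unpacking of the definitions of $\Psi$, $f^\square$, $U_{p^2}$, $V_{p^2}$, $\chi_p$ exactly as you describe, and the one delicate point (that $np^2=|D_k|l^2$ forces $n=|D_k|m^2$ with $l=pm$, including when $p=3=|D_k|$) is handled. The paper itself offers no proof, stating these as "easily verifiable properties," so your routine index-matching argument is precisely the intended one.
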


\begin{proof}[Proof of Theorem \textup{\ref{th1}}]
Consider $f\in\{f_{4a},f_{4b}\}$. For a prime $p\ge5$, the form $f$ is $p$-integral and we have $\ord_q(f)\ge-4$;
therefore Lemma~\ref{Cong1} with $k=2$ applies to result in
$$
f\md T_{p^2}^n\equiv 0 \bmod p^n.
$$
Applying Shimura--Borcherds map \eqref{SB} we deduce that, for $F=\Psi(f)\in\{F_{4a},F_{4b}\}$, we have $F\md \mathcal{T}_p^n\equiv 0 \bmod p^{n}$ for all $n\ge 1$,
hence $F\md U_{p}^n\equiv 0 \bmod p^{n}$; in other words, $F=\sum_{m>0}A(m)q^m$ has the strong $p$-magnetic property:
\begin{equation}\label{mag4}
p^n\mid m\implies p^n\mid A(m) 
\end{equation}
for any prime $p\ge 5$.
This argument also works for $f=f_{4a}$ in the case of $p=3$, because $f_{4a}$ is $3$-integral.

Consider now $p=3$ and $f=f_{4b}$, in which case we only know that $27f$ is $3$-integral.
Take the (unique!) element $g_r\in M_{5/2}^{!, +}$ with $q$-expansion $g_r=q^{-4\cdot 9^r}+O(q)$;
by \cite[Proposition~2]{DJ08} it has integral coefficients.
We first show that $g_0^\square\md T_9^n\equiv 0 \bmod 3^{n+3}$.
For $n=0$ this is true, because $g_0=-108\cdot f_{4b}$ and $f_{4b}^\square$ is in $q\Z[[q]]$.
For $n=1$ we observe that $\Psi(-\frac{1}{108}g_0\md T_9)=F_{4b}\md \mathcal{T}_3$ and $F_{4b}\equiv \Delta\bmod 3$ (since both $E_4,E_6\equiv 1\bmod 3$).
This implies that $F_{4b}\md \mathcal{T}_3\equiv\Delta\md \mathcal{T}_3\equiv 0 \bmod 3$, hence
$$
-\frac{1}{108}g_0^\square\md T_9=\Phi(F_{4b}\md \mathcal{T}_3)\equiv 0\bmod 3
$$
meaning that $g_0^\square\md T_9^n\equiv 0 \bmod 3^{n+3}$ is true when $n=1$.
Since $g_0\md T_9=27 g_1-3g_0$ we also deduce from this that $g_1^\square\equiv 0 \bmod 3$.

For $n$ general, we want to write $g_0\md T_9^n$ as a $\Z$-linear combination of $g_r$ with $r=0,1,\dots,n$.
Looking at the principal part of $g_0\md T_9^n$, one finds out that only terms of the form $q^{-4\cdot 3^{2m}}$ appear, so that subtracting the related linear combination of $f_r$ leads to a holomorphic cusp form, which then must vanish. To examine this linear combination in more details we proceed as in the proof of Lemma~\ref{Cong1}:
$$
g_0\md T_9^n
=\sum_{a,b,c,r} \alpha_{a,b,c,r}\cdot 3^{3c+b}\cdot g_0\md U_9^{a-r}\chi_3^bV_9^{c-3}
$$
(see equation~\eqref{BGK}).
As already noticed in that proof, only the terms with $r=a\le c$ contribute to the principal part, thus to the linear combination; the terms with $r=a$ contribute by the subsum
$$
\sum_{a,b,c} \alpha_{a,b,c,a}\cdot (-1)^b\cdot 3^{3c+b}\cdot g_{c-a}.
$$
Now notice that if $2c\ge a+3$, then the coefficient is divisible by $3^{n+3}$. In the remaining situations we have $2a\le 2c<a+3$, in particular $a\in\{0,1,2\}$, and we use the following analysis:
\begin{enumerate}[(a)]
\item If $a=2$, then the inequalities imply that $c=2$, hence $b=n-4$; the corresponding term is then a multiple of $3^{3\cdot 2+n-4}g_0$.
\item If $a=1$, then $c=1$, hence $b=n-2$; the corresponding term happens to be a multiple of $3^{3\cdot 1+n-2}g_0$.
\item If $a=0$, then $c\in\{0,1\}$. The term corresponding to $c=0$ is a multiple of $3^ng_0$, while the term corresponding to $c=1$ is a multiple of $3^{n+2}\cdot g_1$.
\end{enumerate}
Gathering all the terms, we end up with an expression
$$
g_0\md T_9^n=3^{n+3} g+3^{n+2}\alpha\cdot g_1+3^{n}\beta\cdot g_0,
$$
where $g$ is integral and both $\alpha$ and $\beta$ are integers.
Taking the square parts on both sides and using the results for $n=0,1$ we deduce that $g_0^\square\md T_9^n\equiv 0 \bmod 3^{n+3}$ for any $n=0,1,\dots$\,.
Finally, we apply the Shimura--Borcherds map to this congruence to deduce that $F_{4b}\md \mathcal{T}_3^n\equiv 0 \bmod 3^n$ for all $n\ge 0$. In other words, this implies the congruences \eqref{mag4} for $p=3$.

Turning now our attention to the prime $p=2$, notice that the Hecke operator $T_4$ does not respect the Kohnen plus space. However, if we define the projection
$$
K^+=K_k^+\colon \sum_{n\in\mathbb Z} a(n) q^n \mapsto \sum_{\substack{n\in\mathbb Z\\(-1)^kn\equiv0,1\bmod4}} a(n) q^n,
$$
then the operator $T_4'=K^+\circ T_4$ maps the space $M_{k+1/2}^{!,+}$ onto itself and inherits all the properties used above for $T_{p^2}$  when $p>2$.
We use this operator~$T_4'$ in place of $T_4$ to complete the proof of our Theorem~\ref{th1}.
Notice that in both cases $f=f_{4a}$ and $f=f_{4b}$ has powers of $2$ in the denominator of its main term.
For an ease of the argument, we treat the two cases separately, though the same strategy is used for both, along the line with the proof above of relation \eqref{mag4} for $p=3$.

When $f=f_{4b}$, we need to prove that $F_{4b}\md \mathcal{T}_{2}^n\equiv 0 \bmod 2^n$, which is in turn implied by the congruence  $f_{4b}^\square\md {T_4'}^n\equiv 0\bmod 2^n$. Introduce $g_r=q^{-4\cdot4^r}+O(q)\in M_{5/2}^{!,+}$ with integral $q$-expansions for $r=0,1,\dots$ and notice that $f_{4b}=-\frac{1}{108}\cdot g_0$.
The induction on $r\ge0$ shows that the recursion $g_r\md T_4'=8g_{r+1}+g_{r-1}$ takes place, with the convention that $g_{-1}=0$.
This in turn leads to
$$
g_0\md {T_4'}^n=2^{n+2}g+2^{n+1}\alpha\cdot g_1+2^n\beta\cdot g_0
$$
for some integral $g\in M_{5/2}^{!,+}$ and $\alpha,\beta\in\Z$.
Taking the square parts on both sides and using that $F_{4b}\equiv \Delta \bmod 8$, hence $\Phi(F_{4b}\md\cT_2)\equiv\Phi(\Delta\md\cT_2)\equiv0\bmod8$, we conclude with $g_0^\square\md {T_4'}^n\equiv 0\bmod 2^{n+2}$, hence with \eqref{mag4} for $p=2$ and $F=F_{4b}$.

For $f=f_{4a}$, we introduce the family $g_r=q^{-3\cdot 4^r}+O(q)\in M_{5/2}^{!,+}$, where $r=0,1,\dots$, which is invariant under the action of the operator $T_4'$, and proceed similarly to get exactly the same recursion $g_r\md T_4'=8g_{r+1}+g_{r-1}$ for $r\ge0$ with $g_{-1}=0$. On using $g_0=\frac1{64}f_{4a}$,
$$
g_0\md {T_4'}^n=2^{n+6}g+2^{n+5}\alpha\cdot g_2+2^{n+4}\beta\cdot g_1+2^{n+3}\gamma\cdot g_0
$$
for $n\ge 3$, and $F_{4a}\equiv \Delta \bmod 8$, we conclude with $g_0^\square\md {T_4'}^n\equiv 0\bmod 2^{n+6}$ implying $F_{4a}\md \mathcal{T}_{2}^n\equiv 0 \bmod 2^n$ as required.
\end{proof}

\begin{proof}[Proof of Theorem~\textup{\ref{th2}}]
We now work with $k=3$.
Consider 
$$
f(\tau)=-\frac{1}{384}\,\frac{f_{15+1/2}^*(\tau)}{\Delta(4\tau)}\in \mathcal{M}_{k+1/2}^{!,+},
$$
where $f_{b+1/2}^*$ is the weight $b+1/2$ modular form from the table in \cite[Appendix]{DJ08}.
One can easily check (through the first few coefficients) that $\Psi(f)=F_6$ and from the expression above we also know that $f$ has $p$-integral coefficients for any $p\ge 5$.
It follows from Lemma~\ref{Cong1} (applied this time with $k=3$) that $f\md T_{p^2}^n\equiv 0 \bmod p^{2n}$.
Therefore, $F_6\md \cT_p^n\equiv 0\bmod p^{2n}$ for all $n\ge 0$ implying that $F_6\md U_p^n\equiv 0 \bmod p^{2n}$ and that for $F_6=\sum_{m>0} A(m) q^m$ we have
\begin{equation}\label{cong2}
p^n\mid m\implies p^{2n}\mid A(m)
\end{equation}
for any prime $p\ge5$.

Since $384=3\cdot2^7$, for $p=3$ we see that $3f$ is $3$-integral.
Repeating the argument from Lemma~\ref{Cong1} and using the fact that $f$ is a multiple of the unique element in $ \mathcal{M}_{7/2}^{!,+}$ with the integral $q$-expansion $q^{-1}+O(q)$, we deduce that $f\md T_9^n=3^{2n}\cdot (g+ \alpha f)$ with $\alpha$ an integer and $g$ a $3$-integral modular form.
Indeed, the principal part of $f\md T_9^n$  is an $\Z$-linear combination of the principal parts of
$$
3^{(2\cdot 3-1)c+(3-1)b}\cdot f\md \chi_{3}^b V_9^{c-a}=3^{2n}\cdot (3^{c-a} f)\md \chi_{3}^b V_9^{c-a}.
$$
If $c-a\ge 1$ the principal part of $(3^{c-a} f)\md \chi_{3}^b V_9^{c-a}$ is $3$-integral; when $c=a$ the principal part of $ f\md \chi_{3}^b$ will be an integral multiple of the principal part of~$f$.
Thus, $f\md T_9^n=3^{2n}\cdot (g+ \alpha \cdot f)$ implies (applying the Shimura--Borcherds lift to both sides) that $F_6\md \cT_3^n\equiv 0 \bmod 3^{2n}$, hence we deduce that \eqref{cong2} is true also for $p=3$.

To prove the relation \eqref{cong2} for $p=2$, we proceed as in the proof of Theorem~\ref{th1}.
We introduce the $T_4'$-invariant family of weight $7/2$ weakly holomorphic modular forms $g_r=q^{-4^r}+O(q)$ with integral $q$-expansions with the help of \cite[Proposition~2]{DJ08}.
Again, we write the expression of $g_0\md {T_4'}^n$ as $\Z$-linear combination of $g_r$ with $r=0,1,\dots,n$ and analyse the powers of $2$ appearing in the coefficients; similarly, we can prove that $g_0^\square\md {T_4'}^n\equiv 0 \bmod 2^{2n+7}$ for any $n\ge 0$. For $n=0$ this comes from the integrality of $f^\square$, while for $n=1$ we get it, again, by noticing that $F_6\equiv E_6\Delta \bmod 2^{4}$ while $E_6\Delta$ being an eigenform of weight 18 with slope $4$ at the prime~$2$.
The induction argument follows \emph{mutatis mutandis} as in the proof of Theorem~\ref{th1}.
\end{proof}

\section{Miscellania on half-integral weight modular forms}
\label{w1/2}

In this part, not well related to the proofs of Theorems~\ref{th1} and~\ref{th2}, we indicate a different strategy of constructing half-integral weight weakly holomorphic modular forms using a traditional rising operator.

Standard examples of weight $1/2$ modular forms (see \cite[Sect.~14, Example 2]{Bo95}) include the theta function $\theta(\tau)=\sum_{n\in\mathbb Z}q^{n^2}$ and
\begin{align*}
h_0(\tau)
&=\frac{ E_{2,4}(\tau) \theta(\tau) \,(\theta(\tau)^4-2E_{2,4}(\tau))\,(\theta(\tau)^4-16E_{2,4}(\tau))\,E_6(4\tau)}{\Delta(4\tau)}+56\theta(\tau)
\\
&=q^{-3}-248q+26752q^4+\dotsb,
\end{align*}
where $E_{2,4}(\tau)$ is given in \eqref{E24}. The images of $12\theta$ and $4\theta+h_0$ under the multiplicative Borcherds lift
$$
\Psi^{\text{mult}}\colon\sum_{n\gg-\infty}c(n)q^n
\mapsto q^{-h}\prod_{n>0}(1-q^n)^{c(n^2)}
$$
are the modular forms $\Delta(\tau)$ and $E_4(\tau)$, respectively (see \cite[Theorem~14.1]{Bo95} for the definition of~$h$).
Although it is not useful for our results in this note, we remark that the two weakly holomorphic modular forms can serve as constructors of some weight $5/2$ modular forms from Section~\ref{w5/2}.

\begin{lemma}
\label{lem-D}
The raising operator
$$
\mathcal{D}=\mathcal{D}_k\colon f\mapsto \delta f- \frac{2k+1}{6} E_2(4\tau)\cdot f
$$
maps ${M}_{k+1/2}^{!, +}$ onto ${M}_{k+5/2}^{!, +}$.
\end{lemma}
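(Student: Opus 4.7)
The plan is to verify that $\mathcal{D}_k f$ has the correct transformation law, that it lies in the Kohnen plus space and is weakly holomorphic, and finally that every element of $M_{k+5/2}^{!,+}$ arises in this way. The first three are Serre-derivative-type checks, while surjectivity reduces to a graded triangularity argument on a canonical basis.

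First, for modularity of weight $k+5/2$ on $\Gamma_0(4)$ with the theta multiplier $\chi$, I would differentiate the functional equation $f(\gamma\tau)=\chi(\gamma)(c\tau+d)^{k+1/2}f(\tau)$ for $\gamma=\bigl(\begin{smallmatrix}a&b\\c&d\end{smallmatrix}\bigr)\in\Gamma_0(4)$ to isolate the anomaly
\[
(\delta f)(\gamma\tau)-\chi(\gamma)(c\tau+d)^{k+5/2}\delta f(\tau)=\chi(\gamma)\frac{(2k+1)c}{4\pi i}(c\tau+d)^{k+3/2}f(\tau).
\]
The classical quasi-modular law for $E_2$, transported via $\tau\mapsto 4\tau$ through the lift $\gamma\mapsto\bigl(\begin{smallmatrix}a&4b\\c/4&d\end{smallmatrix}\bigr)\in SL_2(\Z)$, yields $E_2(4\gamma\tau)=(c\tau+d)^2E_2(4\tau)+\frac{3c(c\tau+d)}{2\pi i}$, so $E_2(4\tau)f$ carries a matching anomaly $\chi(\gamma)\frac{3c}{2\pi i}(c\tau+d)^{k+3/2}f(\tau)$. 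The ratio $\frac{(2k+1)/4}{3/2}=\frac{2k+1}{6}$ between the two anomalies is exactly the coefficient chosen in the definition of $\mathcal{D}_k$, so the anomalies cancel.

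For the plus-space condition and weak holomorphy: since $E_2(4\tau)\in 1+q^4\Z[[q^4]]$ has Fourier support on exponents divisible by $4$, multiplication by it preserves the residue modulo $4$ of every supported exponent, and $\delta$ acts diagonally on the Fourier coefficients; consequently the constraint $(-1)^kn\equiv 0,1\bmod 4$ survives, and no new singularities are introduced at any cusp of $\Gamma_0(4)$.

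The main obstacle will be surjectivity. I would use the Duke--Jenkins canonical basis $\{f_{k,m}\}_{m\ge -s_k^+}$ of $M_{k+1/2}^{!,+}$ from \cite[Proposition~2]{DJ08}, normalised so that $f_{k,m}=q^{-m}+O(q^{s_k^++1})$. A direct computation gives
\[
\mathcal{D}_k f_{k,m}=\Bigl(-m-\frac{2k+1}{6}\Bigr)q^{-m}+(\text{higher-order terms}),
\]
and the scalar $-m-(2k+1)/6$ never vanishes on $\Z$ because $6\nmid 2k+1$. Thus $\mathcal{D}_k$ is upper triangular with nonzero diagonal in the filtration by order at $\infty$, and a downward induction on pole order produces a preimage of any $g\in M_{k+5/2}^{!,+}$: subtract an appropriate multiple of $\mathcal{D}_k f_{k,m}$ to kill the leading term of $g$ and iterate. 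The subtle point is the finite-dimensional holomorphic tail: when $\dim M_{2k+4}>\dim M_{2k}$ the image of $M_{k+1/2}^+$ alone is a proper subspace of $M_{k+5/2}^+$, and one must verify that the residual holomorphic contributions of $\mathcal{D}_k f_{k,m}$ for $m$ corresponding to a pole---arising from the interaction of $E_2(4\tau)$ with $q^{-m}$ producing nonzero coefficients at exponents in $\{0,1,\dots,s_{k+2}^+\}$---together span the missing holomorphic directions.
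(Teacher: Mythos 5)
Your argument that $\mathcal{D}_k$ maps $M_{k+1/2}^{!,+}$ \emph{into} $M_{k+5/2}^{!,+}$ is correct, and the two anomaly constants you compute ($\frac{(2k+1)c}{4\pi i}$ for $\delta f$ and $\frac{3c}{2\pi i}$ for $E_2(4\tau)f$, whence the ratio $\frac{2k+1}{6}$) check out. The paper reaches the same conclusion in one line by a slicker route: $\mathcal{D}_k$ differs from the usual Serre derivative $\delta-\frac{2k+1}{24}E_2(\tau)$ by multiplication by $\frac{2k+1}{24}\bigl(E_2(\tau)-4E_2(4\tau)\bigr)$, and $E_2(\tau)-4E_2(4\tau)$ is an honest holomorphic weight-$2$ modular form on $\Gamma_0(4)$, so weight-raising and weak holomorphy are immediate. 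Your treatment of the plus-space condition ($E_2(4\tau)$ supported on exponents in $4\Z$, $\delta$ acting diagonally) is exactly the paper's.

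On surjectivity you have correctly located the difficulty but not resolved it, and in fact the repair you sketch cannot work. Since $-m-\frac{2k+1}{6}\ne0$, the form $\mathcal{D}_kf_{k,m}$ with $m>0$ has a pole of order exactly $m$ at $\infty$; more generally, if $f$ has a pole of order $M>0$ then so does $\mathcal{D}_kf$. Hence the image of $\mathcal{D}_k$ meets the holomorphic subspace $M_{k+5/2}^{+}$ exactly in $\mathcal{D}_k\bigl(M_{k+1/2}^{+}\bigr)$ --- the ``residual holomorphic contributions'' of the pole-basis elements never become available, because any combination involving them retains a pole. As $\mathcal{D}_k$ is injective on $M_{k+1/2}^{+}$ (same leading-coefficient computation) and $\dim M_{\lambda+1/2}^{+}=\dim M_{2\lambda}$ by Kohnen's isomorphism, surjectivity holds if and only if $\dim M_{2k+4}=\dim M_{2k}$; it genuinely fails, for instance, for $k=1$ and $k=4$. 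So the statement's ``onto'' has to be read as ``into'' (or restricted to weights where the two dimensions agree, which covers $k=0$, the only case the paper uses). For what it is worth, the paper's own proof establishes only the mapping property and says nothing about surjectivity, so your downward induction on pole order --- valid precisely when the dimensions match --- already goes further than the paper does.
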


\begin{proof}
Observe that $E_2(\tau)-4E_2(4\tau)$ is a modular form of weight 2 for $\Gamma_0(4)$, so that the difference between the usual raising operator and $\mathcal{D}$ is the multiplication by a weight 2 modular form, thus indeed $\mathcal{D}\colon{M}_{k+1/2}^{!}\to {M}_{k+5/2}^{!}$. On the other hand, both $\delta$ and multiplication by any modular form $f(4\tau)$ preserve the Kohnen plus space condition, and the lemma follows.
\end{proof}

For the functions $g_0$, $f_{4a}$ and $f_{4b}$ in Section~\ref{w5/2} we find out that
$$
g_0= -6 \mathcal{D}\theta,
\quad
64f_{4a}=-\frac{6}{19} \mathcal{D}h_0
$$
and
$$
108 f_{4b}=-\frac{6}{25}\mathcal{D}\biggl(2 h_0-1006\theta-\frac{\theta E_6(4\tau)^2}{\Delta(4\tau)}\biggr).
$$

\section{Concluding remarks}

Though we expect that our discussion above exhausts all elements with the magnetic property in $W_4^0$, many such exist for $W_{2k}^0$ with $k>2$; for example, the $q$-series $E_2^m\cdot(\delta E_j)/E_j$ for $j=4,6$ and $m=1,2,3,4,6$ (but not for $m=5$). Constructing magnetic \emph{modular} forms\,---\,meromorphic ones with poles at quadratic irrationalities from the upper half-plane\,---\,is a routine on the basis of Shimura--Borcherds (SB) lift~\eqref{SB};
Table~\ref{tab1} lists a few instances of this production explicitly in terms of the $j$-ivariant $j(\tau)=E_4^3/\Delta$.
Generating the forms with multiple magnetic property in higher weights is a tougher task;
one such example $E_4^2(j -3\cdot 2^{10})/j^2$ can be found in the more recent work \cite{LS20} of L\"obrich and Schwagenscheidt; another example of a triply magnetic form of weight~8 is
$$
E_4^2\,\frac{13 j^3 - 443556 j^2 + 1610452125 j - 98280\cdot 15^6}{(j+15^3)^4}.
$$
We have observed that in all such instances the related numerators, viewed as polynomials in~$j$, have real zeroes only.
Furthermore, there are weaker divisibility conditions (resembling the Honda--Kaneko congruences \cite{HK12}) for individual summands of magnetic forms; for example, the anti-derivatives of
$$
\frac{E_4j}{(j-2\cdot 30^3)^2} \quad\text{and}\quad \frac{E_4}{(j-2\cdot 30^3)^2}
$$
are already $p$-integral for primes $p\equiv 5\pmod6$.
We have not tried to investigate this arithmetic subphenomenon.

There is a good reason to believe that all such magnetic forms originate from suitable Shimura--Borcherds lifts.
But, maybe, there is more in this story\,---\,then time will show.

\begin{table}[h]
\caption{Strong magnetic modular forms of weight $4$ (where $f_m=q^{-m}+O(q)$ denotes the unique weakly holomorphic cusp form in $M_{5/2}^{!,+}$)}
\begin{tabular}{|c|c|}
\hline
SB lift & description in terms of $E_4$ and $j=E_4^3/\Delta$ \\
\hline
\hline
$\Psi(3^{-3} f_7)$ & $\displaystyle E_4\,\frac{19 j- 8\cdot 15^3}{(j+15^3)^2}$ \\
\hline
$\Psi(-2^{-3} f_8)$ & $\displaystyle E_4\,\frac{101 j- 3\cdot 20^3}{(j-20^3)^2}$ \\
\hline
$\Psi(2^{-6} f_{11})$ & $\displaystyle E_4\,\frac{43 j- 6\cdot 32^3}{(j+32^3)^2}$ \\
\hline
$\Psi(48^{-2}f_3\md T_4)$ & $\displaystyle E_4\,\frac{14 j+ 18\cdot 15^3}{(j-2\cdot 30^3)^2}$ \\
\hline
$\Psi(108^{-1}f_4\md (1-\tfrac{1}{2}T_4))$ & $\displaystyle E_4\,\frac{611 j+ 404\cdot 33^3}{(j- 66^3)^2}$ \\
\hline
$\Psi(3^{-3} f_7\md (2-\tfrac{1}{2}T_4))$ & $\displaystyle E_4\,\frac{82451 j+ 5272\cdot 255^3}{(j- 255^3)^2}$ \\
\hline
$\Psi(12^{-3} f_{19})$ & $\displaystyle E_4\,\frac{25 j- 2\cdot 96^3}{(j+96^3)^2}$ \\
\hline
$\Psi(12^{-3} f_{43})$ & $\displaystyle E_4\,\frac{11329 j- 578\cdot 960^3}{(j+960^3)^2}$ \\
\hline
$\Psi(12^{-3} f_{67})$ & $\displaystyle E_4\,\frac{1221961 j- 49442\cdot 5280^3}{(j+5280^3)^2}$ \\
\hline
$\Psi(12^{-3} f_{163})$ & $\displaystyle E_4\,\frac{908855380249 j- 23238932978\cdot 640320^3}{(j+640320^3)^2}$ \\
\hline
$\Psi(15^{-1} f_{15})$ & $\displaystyle E_4\,\frac{785 j^3-15219684 j^2+28709816985 j+837864\cdot 495^3}{(j^2 + 191025 j - 495^3)^2}$ \\
\hline
$\Psi(-80^{-1} f_{20})$ & $\displaystyle E_4\,\frac{733 j^3+72767680  j^2-984198615040 j+123\cdot 20^3\cdot 880^3}{(j^2-158\cdot 20^3 j-880^3)^2}$ \\
\hline
$\Psi(- f_{23})$ & $\displaystyle E_4\,\frac{P_{23}(j)}{(j^3+27934\cdot 5^3 j^2-329683\cdot 5^6 j+187^3\cdot 5^9)^2}$ \\
\hline
\end{tabular}
\hbox to\hsize{where\hfill}
\begin{align*}
P_{23}(j)&=141826 j^5-286458244\cdot 5^3 j^4+5214621227\cdot 5^6 j^3+3414887843776\cdot 5^9 j^2
\\ &\qquad
-47816219216827\cdot 5^{12} j+4378632\cdot 187^3\cdot 5^{15}
\end{align*}
\label{tab1}
\end{table}

\medskip\noindent
\textbf{Acknowledgements.}
We thank the anonymous referee for their critical reading and valuable comments:
those really helped to reduce errors and improve the exposition.

%==================================================

\end{document}